\newtheorem{theorem}{Theorem}[section]
\newtheorem{lemma}[theorem]{Lemma}
\theoremstyle{definition}
\newtheorem{definition}[theorem]{Definition}
\newtheorem{remark}[theorem]{Remark}
\numberwithin{equation}{section}
\newcommand{\Natural}{{\mathbb N}}
\newcommand{\Real}{{\mathbb R}}
\newcommand{\Complex}{{\mathbb C}}
\newcommand{\Integral}{{\mathbb Z}}
\newcommand{\ud}{{\mathrm{d}}}
\newcommand{\tlen}{{\ell}}
\title[{Virtual homological eigenvalues and the WP translation length}]{
Virtual homological eigenvalues and\\ the Weil--Petersson translation length}
\author[Yi Liu]{%
        Yi Liu} 
\address{%
        Beijing International Center for Mathematical Research, Peking University\\
				Beijing 100871, China P.R.} 
\email{%
    liuyi@bicmr.pku.edu.cn}
\thanks{Partially supported by NSFC Grant 11925101, 
and National Key R\&D Program of China 2020YFA0712800}
\subjclass[2020]{Primary 57K20; Secondary 57K31}
\keywords{homological eigenvalue, finite cover, Weil--Petersson metric, translation length}
\date{%
 \today} 
\begin{document}

\begin{abstract}
	For any pseudo-Anosov automorphism on an orientable closed surface,
	an inquality is established bounding 
	certain growth of virtual homological eigenvalues with the Weil--Petersson translation length.
	The new inquality fits nicely with other known inequalities 
	due to Kojima and McShane, and due to L\^{e}.	
	
	The new quantity to be considered is 
	the square sum of the logarithmic radii of 
	the homological eigenvalues (with multiplicity)
	outside the complex unit circle,
	called the homological \emph{Jensen square sum}.
	The main theorem is as follows.
	For any cofinal sequence of regular finite covers of a given surface,
	together with lifts of a given pseudo-Anosov,
	the homological Jensen square sum of the lifts
	grows at most linearly fast compared to the covering degree,
	and the square root of the growth rate is at most 
	$1/\sqrt{4\pi}$ times
	the Weil--Petersson translation length of the given pseudo-Anosov.	
\end{abstract}

\maketitle

\section{Introduction}
Let $S$ be a connected closed orientable surface of genus at least $2$.
Let $f\colon S\to S$ be a pseudo-Anosov automorphism.
In \cite{Kojima--McShane},
S.~Kojima and G.~McShane establish an elegant inequality,
which we can rearrange as
\begin{equation}\label{KM_inequality}
\frac{\mathrm{Vol}_{\mathtt{h}}(M_f)}{6\pi}\leq
\frac{\ell_{\mathtt{WP}}(f)}{\sqrt{4\pi}}\times \sqrt{\mathrm{genus}(S)-1}.
\end{equation}
In this inequality,
$\mathrm{Vol}_{\mathtt{h}}(M_f)$ denotes the hyperbolic volume of the mapping torus $M_f$
of $f$, with respect to its isometrically unique hyperbolic geometry,
and $\ell_{\mathtt{WP}}(f)$ denotes the Weil--Petersson translation length of $f$,
as a mapping class acting on the Teichm\"uller space of $S$.

The left-hand side of (\ref{KM_inequality}) is equal to 
the logarithmic $L^2$ torsion of the orientable closed $3$--manifold $M_f$; 
see \cite[Chapter 3]{Lueck_book} (with a different sign convention).
This quantity is a known upper bound for the superior linear growth rate
of the virtual homological logarithmic Mahler measures,
with respect to
any cofinal sequence of regular finite covers $(S'_n)_{n\in\Natural}$ of $S$
and lifts $(f'_n)_{n\in\Natural}$ of $f$.
To be precise, 
if $\kappa_n\colon S'_n\to S$ are cofinal regular finite covering maps
(that is, the corresponding finite-index normal subgroups of $\pi_1(S)$ 
have trivial common intersection),
and 
if $f'_n\colon S'_n\to S'_n$ are coverings of $f$
(that is, $\kappa_n\circ f'_n=f\circ \kappa_n$),
then the following inequality holds,
\begin{equation}\label{Le_pA}
\varlimsup_{n\to\infty} \frac{\mathbf{m}\left(P_n\right)}{[S'_n:S]} \leq \frac{\mathrm{Vol}_{\mathtt{h}}(M_f)}{6\pi}.
\end{equation}
In this inequality, $[S'_n:S]$ denotes the covering degree,
and $\mathbf{m}(P_n)$ denotes the logarithmic Mahler measure
of the characteristic polynomial $P_n$ 
of the induced linear automorphism $f'_{n*}\colon H_1(S'_n;\Complex)\to H_1(S'_n;\Complex)$.
In fact, (\ref{Le_pA}) follows easily from an inequality due to T.~T.~Q.~L\^{e} \cite{Le_hyperbolic_volume},
combined with \cite[Theorem 1]{Le_mahler_measure}.

We recall that for any nonzero complex polynomial $Q=Q(z)$,
the \emph{logarithmic Mahler measure} of $Q$ refers to the non-negative quantity
$$\mathbf{m}(Q)=\frac{1}{2\pi}\int_0^{2\pi} \log \left|Q\left(e^{\sqrt{-1}\cdot\theta}\right)\right|\,\ud\theta.$$
When $Q$ is monic of degree $d$ with complex roots $\lambda_1,\cdots,\lambda_d$,
the Jensen formula implies that $\mathbf{m}(Q)$ measures
exactly the total logarithmic radii of those roots outside the complex unit circle,
namely,
$$\mathbf{m}(Q)=\sum_{j=1}^d \log \max(1,\left|\lambda_j\right|).$$
See \cite[Chapter 1]{Everest--Ward_book}.

For any monic polynomial $Q$ of degree $d$ with roots $\lambda_1,\cdots,\lambda_d$, 
we introduce another non-negative quantity
\begin{equation}\label{w_def}
	\mathbf{w}(Q)=\sum_{j=1}^d\,\log^2\max\left(1,\left|\lambda_j\right|\right).
\end{equation}
In general, $\mathbf{w}(Q)$ is very different from the average of $\log^2|Q|$ over the unit circle.
The latter has been called the second higher Mahler measure; compare \cite{KLO_higher_mahler}.
Let us call $\mathbf{w}(Q)$ the \emph{Jensen square sum} of $Q$.

The polynomials $P_n$ in (\ref{Le_pA}) are monic of degree 
$2\times\mathrm{genus}(S'_n)$.
Moreover, they are all reciprocal.
We observe an elementary Cauchy--Schwartz inequality,
$$\frac{\mathbf{m}(P_n)}{[S'_n:S]}\leq 
\sqrt{\frac{\mathbf{w}(P_n)}{[S'_n:S]}}
\times\sqrt{\frac{\mathrm{genus}(S'_n)}{[S'_n:S]}},$$
where the limit of the last factor is clear,
$$\lim_{n\to\infty}\sqrt{\frac{\mathrm{genus}(S'_n)}{[S'_n:S]}}=
\sqrt{\mathrm{genus}(S)-1}.$$

In light of (\ref{KM_inequality}) and (\ref{Le_pA}),
there seems to be some unrevealed inequality
between the growth of $\mathbf{w}(P_n)$ and $\tlen_{\mathtt{WP}}(f)$.
In this paper, we establish that inequality,
exactly in the predicted form.

This is the following main theorem.

\begin{theorem}\label{main_vhev_WP}
	Let $f\colon S\to S$ be a pseudo-Anosov automorphism
	on a connected closed orientable surface of genus at least $2$.	
	Then, for any cofinal sequence of regular finite covers $(S'_n)_{n\in\Natural}$ of $S$
	with lifts $(f'_n)_{n\in\Natural}$ of $f$, 
	the following inequality holds,
	$$\varlimsup_{n\to\infty}\,\sqrt{\frac{\mathbf{w}(P_n)}{[S'_n:S]}}
	\leq
	\frac{\tlen_{\mathtt{WP}}(f)}{\sqrt{4\pi}},$$
	where $P_n$ denotes the characteristic polynomial	of $f'_{n*}$ on $H_1(S'_n;\Complex)$.
\end{theorem}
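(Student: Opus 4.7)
The plan is to produce a second-moment refinement of L\^e's inequality (\ref{Le_pA}), paralleling the way Cauchy--Schwartz and the reciprocity of $P_n$ allow the main theorem to recover (\ref{Le_pA}) from (\ref{KM_inequality}), as observed in the introduction. Concretely, the target is to identify $\varlimsup_n\mathbf{w}(P_n)/[S'_n:S]$ with an $L^2$-spectral invariant of $f$ and bound it by $\ell_{\mathtt{WP}}(f)^2/(4\pi)$. The first step is a spectral reformulation of $\mathbf{w}(P_n)$ amenable to an $L^2$-limit; a natural device is the double-Jensen identity
$$\log^2\max(1,|\lambda|) = \frac{1}{(2\pi)^2}\int_0^{2\pi}\!\!\int_0^{2\pi}\log\bigl|e^{\sqrt{-1}\cdot\theta_1}-\lambda\bigr|\cdot\log\bigl|e^{\sqrt{-1}\cdot\theta_2}-\lambda\bigr|\,\ud\theta_1\,\ud\theta_2,$$
which writes $\mathbf{w}(P_n)$ as a diagonal trace over the spectrum of $f'_{n*}$, integrated over $(S^1)^2$, in a form ready to be compared across the tower.

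The second step is to apply a L\"uck--Schick-type approximation theorem, adapted from the Fuglede--Kadison determinant setting (which handles $\mathbf{m}$) to the diagonal second-moment setting relevant for $\mathbf{w}$, so as to identify $\lim_n \mathbf{w}(P_n)/[S'_n:S]$ with a genuine $L^2$-spectral invariant $\mathbf{w}^{(2)}(f)$ of $f$ acting on the $L^2$-homology of the universal cover. The decisive third step is to establish $\mathbf{w}^{(2)}(f)\leq\ell_{\mathtt{WP}}(f)^2/(4\pi)$; here I would adapt the Kojima--McShane harmonic-map argument, but tracking a Hessian-type (second-variation) quantity along the WP geodesic axis of $f$ in Teichm\"uller space, rather than a first variation of hyperbolic volume. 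The factor $4\pi$ has the signature of the Gauss--Bonnet/Wirtinger constant arising naturally from such Hessian estimates on a hyperbolic surface, which is consistent with the target bound.

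The hardest part will be the third step. In the first-moment case, $\mathbf{m}(P_n)/[S'_n:S]$ is approximated by the logarithmic $L^2$-torsion of $M_f$, a well-developed geometric invariant that Kojima--McShane have already bounded by WP length. The Jensen square sum $\mathbf{w}$ has no analogous torsion-type interpretation: it differs from the second higher Mahler measure $\tfrac{1}{2\pi}\int\log^2|Q(e^{\sqrt{-1}\cdot\theta})|\,\ud\theta$ precisely by cross-terms between distinct pairs of eigenvalues. Identifying an $L^2$-invariant that is simultaneously the spectral limit of $\mathbf{w}(P_n)/[S'_n:S]$ and susceptible to WP control is thus the essential difficulty, and I expect the management of those cross-terms---either through exact cancellation or through a proof of asymptotic negligibility in the cofinal tower---to be the most intricate part of the argument.
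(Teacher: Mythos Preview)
Your proposal is a research outline rather than a proof, and the gaps you flag yourself are genuine and unresolved. Step~2 would require an approximation theorem for normalized traces of the form $\mathrm{tr}\bigl(F_{\theta_1}(f'_{n*})F_{\theta_2}(f'_{n*})\bigr)/[S'_n:S]$ with $F_\theta(x)=\log|e^{\sqrt{-1}\cdot\theta}-x|$; unlike the Fuglede--Kadison determinant case there is no existing L\"uck--Schick statement of this shape, and the singularities of $F_\theta$ on the unit circle would have to be controlled uniformly in $(\theta_1,\theta_2)$. Step~3 is essentially empty: you posit an $L^2$-invariant $\mathbf{w}^{(2)}(f)$ and a ``Hessian-type'' estimate along the Weil--Petersson axis, but neither the invariant nor any such second-variation mechanism is defined, and no existing argument produces the bound $\tlen_{\mathtt{WP}}(f)^2/(4\pi)$ in this way. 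Without these, the proposal does not reach the conclusion.

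The paper's argument is entirely different and avoids $L^2$-invariants altogether. The starting point is the exact formula $\tlen_{\mathtt{S}}(\varphi)=2\sqrt{\mathbf{w}(P_\varphi)}$ for the translation length of any $\varphi\in\mathrm{Sp}(V)$ on the Siegel space $\mathfrak{H}(V)$, proved via symplectic normal forms; this gives $\mathbf{w}(P_n)$ an immediate metric meaning. Pulling the Siegel metric back to $\mathrm{Teich}(S)$ through the period map $\mathrm{Teich}(S)\to\mathrm{Teich}(S'_n)\to\mathfrak{H}(S'_n)$ yields a virtual Royden--Siegel pseudometric, which is pointwise dominated by twice the virtual Habermann--Jost metric built from the Bergman metric on $X'_n$. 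Kazhdan's theorem---that the virtual Bergman metrics on a cofinal sequence converge uniformly to $g_{\mathtt{h}}/4\pi$---then gives $\|\xi\|_{\mathtt{HJ}'_n}\to\|\xi\|_{\mathtt{WP}}/\sqrt{4\pi}$. Comparing the lengths of a single path along the Weil--Petersson axis of $f$ in all of these metrics yields the inequality directly. In particular, the constant $\sqrt{4\pi}$ comes from Kazhdan's Bergman-kernel limit, not from a Gauss--Bonnet or Hessian computation as you anticipated.
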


With notations of Theorem \ref{main_vhev_WP}, 
denote by $\mathbf{h}(P_n)$ the maximum of $\log \max(1,|\lambda|)$
where $\lambda$ ranges over all the complex roots of $P_n$.
Denote by $\tlen_{\mathtt{T}}(f)$ the Teichm{\"u}ller translation length of $f$,
which is equal to the entropy of $f$.
In \cite{McMullen_entropy}, C.~T.~McMullen shows that the well-known inequality
$$\varlimsup_{n\to\infty}\,\mathbf{h}(P_n)\leq \tlen_{\mathtt{T}}(f),$$
must be strict if the invariant foliations of $f$ have a prong singularity of odd order,
and moreover, in that case,
there is a uniform gap that depends only on $f$.
It seems reasonable to guess
that the inequality in Theorem \ref{main_vhev_WP} is strict
for many, if not all, pseudo-Anosov $f$,
and there is a uniform gap that depends only on $f$.

For any pseudo-Anosov $(S,f)$,
one can always find a sequence $(S'_n, f'_n)_{n\in\Natural}$
as in Theorem \ref{main_vhev_WP},
such that $\mathbf{h}(P_n)>0$ holds for each $f'_n$; see \cite{Liu_vhsr}.
Hence, $\mathbf{m}(P_n)>0$ and $\mathbf{w}(P_n)>0$ also hold.

Note that $\varlimsup_{n}\mathbf{w}(P_n)/[S'_n:S]>0$ holds 
if and only if $\varlimsup_{n}\mathbf{m}(P_n)/[S'_n:S]>0$ holds.
Although it appears heuristically very possible, 
there are no known examples of a pseudo-Anosov $(S,f)$
and a sequence $(S'_n, f'_n)_{n\in\Natural}$
as in Theorem \ref{main_vhev_WP}, such that $\mathbf{w}(P_n)$ or $\mathbf{m}(P_n)$
grows strictly linearly fast compared to the covering degree.

Below, we outline the proof of Theorem \ref{main_vhev_WP}.
Our argument is inspired by \cite{McMullen_entropy}.
The main difference is that
we make use of invariant metrics on the Teichm{\"u}ller space 
related to the Weil--Petersson metric, rather than the Teichm{\"u}ller metric.

Using the Bergman metric on Riemann surfaces $X$ marked by $S$,
one obtains an invariant Riemannian metric on the Teichm\"{u}ller space $\mathrm{Teich}(S)$,
which we call the Habermann--Jost metric.
The way is just the same
as obtaining the Weil--Petersson metric on $\mathrm{Teich}(S)$
from the conformal hyperbolic metric on $X$.
Using the natural map $\mathrm{Teich}(S)\to \mathfrak{H}(S)$ 
of the Teichm\"{u}ller space to the Siegel space of Hodge structures on $H_1(S;\Complex)$,
one obtains an invariant Riemannian pseudometric on $\mathrm{Teich}(S)$
pulling back the Siegel metric on $\mathfrak{H}(S)$,
which we call the Royden--Siegel metric.
Upon suitable normalization,
the $L^2$ norms of these metrics satisfy the comparison
$2\times\|\xi\|_{\mathtt{HJ}}\geq\|\xi\|_{\mathtt{RS}}$,
for any tangent vector $\xi\in T_X\mathrm{Teich}(S)$.

Moreover, virtual versions of the above metrics on $\mathrm{Teich}(S)$ 
can be defined using regular finite covering $S'\to S$
and the natural embedding $\mathrm{Teich}(S)\to \mathrm{Teich}(S')$.
Upon suitable normalization, 
the $L^2$ norms of the virtual Habermann--Jost metrics satisfy
the convergence
$\lim_{n}\|\xi\|_{\mathtt{HJ}'_n}=\|\xi\|_{\mathtt{WP}}/\sqrt{4\pi}$,
for any sequence of covers $(S'_n)_{n\in\Natural}$ as assumed.
This is basically because the virtual Bergmann metrics $g_{\mathtt{B}'_n}$ on $X$
converges to the rescaled conformal hyperbolic metric $g_{\mathtt{h}}/4\pi$.

With the above facts, 
consider any point on the Weil--Petersson axis in $\mathrm{Teich}(S)$
of a given pseudo-Anosov $f$ on $S$. 
Then, essentially speaking, we can obtain comparison of translation lengths
(see Definition \ref{ell_X_def}) as follows:
$$\frac{\tlen_{\mathtt{WP}}(f)}{\sqrt{4\pi}}
\geq \varlimsup_{n\to\infty}\, \tlen_{\mathtt{HJ}'_n}(f)
\geq \varlimsup_{n\to\infty}\, \frac{\tlen_{\mathtt{RS}'_n}(f)}2
\geq \varlimsup_{n\to\infty}\, \frac{\tlen_{\mathtt{S}}(f'_{n*})}{2\times\sqrt{[S'_n:S]}}.$$
The nominator $\tlen_{\mathtt{S}}(f'_{n*})$ in the last expression 
refers to the Siegel translation length of $f'_{n*}\in\mathrm{Sp}(H_1(S'_n;\Real))$ 
acting on $\mathfrak{H}(S'_n)$.
On the other hand,
we can establish the formula
$$\tlen_{\mathtt{S}}(f'_{n*})=2\times \sqrt{\mathbf{w}(P_n)},$$
by studying the Siegel geometry on the generalized upper half plane model.
Then the desired inequality follows.

This paper is organized as follows.
In Section \ref{Sec-prelim}, 
we recall structures associated to the Teichm\"uller space,
and general properties of translation length.
In Section \ref{Sec-Siegel_tl},
we determine the Siegel translation length of a symplectic linear transformation.
In Section \ref{Sec-metrics},
we recall several invariant Riemannian metrics on the Teichm\"uller space, as mentioned above,
and discuss their virtual versions.
In Section \ref{Sec-two_lemmas},
we prove two technical lemmas regarding comparison and convergence of virtual metrics.
In Section \ref{Sec-main_proof},
we prove our main result (Theorem \ref{main_vhev_WP}).

\section{Preliminaries}\label{Sec-prelim}
In this section, we recall background materials needed for our discussion.
For general reference books, 
see \cite{Hubbard_book} for Teichm{\"u}ller theory,
and \cite{Siegel_book} for the Siegel geometry,
and \cite[Chapter II.6]{Bridson--Haefliger_book} 
about the translation length of isometries on metric spaces.
We only consider closed surfaces of hyperbolic type,
as it suffices for our purpose.

\subsection{The Teichm{\"u}ller space}
Let $S$ be an oriented closed surface of genus at least $2$.
The \emph{Teichm{\"u}ller space} $\mathrm{Teich}(S)$ 
consists of all the isotopy classes of complex structures on $S$,
compatible with the fixed orientation.
Equivalently, we think of any point of $\mathrm{Teich}(S)$ 
as represented by 
a Riemann surface $X$ together 
with an orientation-preserving homeomorphism $S\to X$,
called the \emph{marking} of $X$,
and denote the point as $X$ with the marking implicit.

We think of $\mathrm{Teich}(S)$ as a smooth (real) manifold, 
diffeomorphic to an open cell of dimension $6\times(\mathrm{genus}(S)-1)$.
There are natural identifications of the tangent and the cotangent spaces (as real vector spaces)
\begin{eqnarray*}
T^*_X\mathrm{Teich}(S)&\cong& Q(X),\\
T_X\mathrm{Teich}(S)&\cong& B(X)/Q(X)^{\perp},
\end{eqnarray*}
at any point $X\in \mathrm{Teich}(S)$.
Here,
$Q(X)$ denotes the space of all the holomorphic quadratic differentials on $X$,
and $B(X)$ denotes the space of all the $L^\infty$ Beltrami differentials on $X$.
These spaces pair naturally as
$$Q(X)\times B(X) \longrightarrow  \Real\colon
(q,\mu) \mapsto \int_X q\mu,$$
so $Q(X)^\perp$ refers to the subspace of $B(X)$ annihilated by $Q(X)$,
namely,
$\mu\in B(X)$ lies in $Q(X)^\perp$ if and only if $\int_X q\mu=0$ holds for all $q\in Q(X)$.

The \emph{mapping class group} $\mathrm{Mod}(S)$ of $S$,
consisting of all the isotopy classes of orientation-preserving self-homeomorphisms,
acts properly and diffeomorphically on $\mathrm{Teich}(S)$,
transforming the markings of points.

There are many natural differential metrics (or pseudometrics) on $\mathrm{Teich}(S)$
that are invariant under the action of $\mathrm{Mod}(S)$.
For example,
the \emph{Teichm{\"u}ller metric} $d_{\mathtt{T}}$
is an invariant Finsler metric determined uniquely by the formula
$$d_{\mathtt{T}}(X,Y)=\frac{\log K(X,Y)}2,$$
for any $X,Y\in\mathrm{Teich}(S)$,
where $K(X,Y)>1$ denotes the quasiconformity constant of the Teichm\"uller extremal map $X\to Y$
that commutes homotopically with the markings.
The infinitesmal forms of $d_{\mathtt{T}}$ are the norms
$$\|q\|_{\mathtt{T}}=\int_X |q|$$
for any $q$ in $Q(X)\cong T^*_X\mathrm{Teich}(S)$, and
$$\|\xi\|_{\mathtt{T}}=\sup\left\{\left|\int_X q\mu\right|\colon \int_X |q|\leq 1\right\}$$
for any $\xi=\mu+Q(X)^\perp$ in $B(X)/Q(X)^\perp\cong T_X\mathrm{Teich}(S)$.

We recall the Weil--Petersson metric $d_{\mathtt{WP}}$
and other invariant Riemannian metrics on $\mathrm{Teich}(S)$
in Section \ref{Sec-metrics} with more discussion.

\subsection{The Siegel space}
Let $V$ be a real vector space of dimension $2p$
furnished with a symplectic form $\omega$.
Then $\omega$ extends complex linearly over
the complex vector space $V\otimes_\Real\Complex$
as a complex-valued alternating $2$-form,
and determines a Hermitian form of signature $(p,p)$ on $V\otimes_\Real\Complex$,
$$\langle z,w\rangle=\frac{\sqrt{-1}}2\cdot\omega(z,\bar{w}).$$
The \emph{Siegel space} $\mathfrak{H}(V)$ consists of 
all the Hermitian orthogonal splittings
$$V\otimes_\Real\Complex=V^{1,0}\oplus V^{0,1},$$
such that $\langle\_,\_\rangle$ is positive definite
on $V^{1,0}$, and the complex conjugation $z\mapsto \bar{z}$
maps $V^{1,0}$ isomorphically onto $V^{0,1}$.
We call any point of the Siegel space $\mathfrak{H}(V)$
a \emph{Hodge structure} 
on $V\otimes_\Real\Complex$ with respect to $\omega$.

The group $\mathrm{Sp}(V)\cong \mathrm{Sp}(2p,\Real)$ 
of symplectic linear transformations acts transitively on $\mathfrak{H}(V)$,
such that any $\varphi\in\mathrm{Sp}(V)$ extends complex linearly over $V\otimes_\Real\Complex$,
and takes any Hodge structure $V^{1,0}\oplus V^{0,1}$ to 
$\varphi(V^{1,0})\oplus\varphi(V^{0,1})$.
The isotropy group at $V^{1,0}\oplus V^{0,1}$ is isomorphic to 
the unitary group $\mathrm{U}(V^{1,0})\cong \mathrm{U}(p)$, 
acting simultaneously on the summands $V^{1,0}$ and $V^{0,1}$
as canonical unitary transformations and their complex conjugates, respectively.
In particular, $\mathfrak{H}(V)$ is 
the homogenous space associated to $\mathrm{Sp}(V)$,
diffeomorphic to an open cell of dimension $p^2+p$.

The most important example to our interest 
is the Siegel space of Hodge structures on a surface.
If $S$ is an oriented closed surface of genus at least $2$,
then the first real cohomology $H^1(S;\Real)$ is furnished with
a natural symplectic form,
evaluating the cup product of any pair of $1$-classes 
on the fundamental class.
In this case, we simply denote the Siegel space of Hodge structures
on $H^1(S;\Complex)$ as $\mathfrak{H}(S)$.
For any Teichm{\"u}ller point $X\in \mathrm{Teich}(S)$,
we obtain a unique Hermitian orthogonal splitting
$$H^1(S;\Complex)\cong H^1(X;\Complex)=H^{1,0}(X)\oplus H^{0,1}(X)\cong \Omega^1(X)\oplus \overline{\Omega^1(X)},$$
where the complex linear subspaces $H^{1,0}(X)$ and $H^{0,1}(X)$ 
of $H^1(X;\Complex)$ are naturally identified as the spaces $\Omega^1(X)$ and $\overline{\Omega^1(X)}$
of the holomorphic and the anti-holomorphic differentials on $X$, respectively,
and 
where $H^1(S;\Complex)$ is identified with $H^1(X;\Complex)$ via the marking.
Therefore, the construction determines a natural map
\begin{equation}\label{T_to_H}
J\colon \mathrm{Teich}(S)\to \mathfrak{H}(S),
\end{equation}
which is equivariant with respect to the natural group homomorphism
$\mathrm{Mod}(S)\to\mathrm{Sp}(H^1(S;\Real))$.
It is known that $J$ is surjective and smooth,
and the tangent map of $J$ is injective except along the hyperelliptic locus;
see \cite[Section 3]{Royden_invariant_metric} for more information.

Back to the general setting with $(V,\omega)$.
There is a unique invariant Riemannian metric on 
$\mathfrak{H}(V)$ with respect to $\mathrm{Sp}(V)$, up to normalization.
In fact, 
$(\mathfrak{H}(V),\mathrm{Sp}(V))$ forms a simple Riemannian symmetric space of noncompact type,
(and morover, a Hermitian symmetric domain).
This metric has been studied by C.~L.~Siegel systematically
in his expository book \cite{Siegel_book}.
Fixing a symplectic basis of $V$,
we can identify $V$ as the real linear space 
spanned by $\vec{x}_1,\cdots,\vec{x}_p,\vec{y}_1,\cdots,\vec{y}_p$ and furnished with 
the standard symplectic form $\vec{x}^*_1\wedge \vec{y}^*_1+\cdots+\vec{x}^*_p\wedge \vec{y}^*_p$.
Then the transformational geometry of 
the Siegel space $(\mathfrak{H}(V),\mathrm{Sp}(V))$ can be identified
with Siegel's generalized upper half plane model,
on which the invariant metric can be described explicitly.
We recall the explicit description below,
following Siegel's normalization in \cite{Siegel_book}.

\subsection{The generalized upper half plane}
For any natural number $p$, the \emph{generalized upper half plane} $\mathfrak{H}_p$ of rank $p$ 
refers to the open subset of symmetric $p\times p$--matrices of complex entries,
such that the imaginary parts are positive definite.
Namely,
\begin{equation}\label{Siegel_UHP}
\mathfrak{H}_p=\left\{ Z\in \mathrm{Sym}_{p\times p}(\Complex)\colon \Im Z>0\right\},
\end{equation}
where we denote
$\mathrm{Sym}_{p\times p}(\Complex)=\{ Z\in\mathrm{Mat}_{p\times p}(\Complex)\colon Z^\dagger=Z\}$
($\dagger$ meaning transpose) and $\Im Z=(Z-\bar{Z})/2\sqrt{-1}$
\cite[Chapter I, \S 2]{Siegel_book}.

The Riemannian metric tensor of the the Siegel metric 
at any point $Z\in \mathfrak{H}_p$ is defined as
\begin{equation}\label{g_S_def}
g_{\mathtt{S}}=\mathrm{tr}\left(Y^{-1}\cdot\ud Z\cdot Y^{-1}\cdot \overline{\ud Z}\right)
\end{equation}
where we denote $Y=\Im Z$ and 
treat the matrix entries of the real and the imaginary parts as coordinates
\cite[Chapter III, \S 11]{Siegel_book}.
The Siegel metric $g_{\mathtt{S}}$ has nonpositive sectional curvature everywhere 
\cite[Chapter III, \S 17]{Siegel_book}.
In other words, $(\mathfrak{H}_p,g_{\mathtt{S}})$ forms a Hadamard manifold,
(by definition, simply connected, metric complete, and nonpositively curved).

For any pair of points $Z,W\in\mathfrak{H}_p$,
the generalized cross-ratio matrix
$$R=R(Z,W)=(Z-W)\cdot(Z-\bar{W})^{-1}\cdot(\bar{Z}-\bar{W})\cdot(\bar{Z}-W)^{-1}$$
is defined in $\mathrm{Mat}_{p\times p}(\Complex)$, with all characteristic roots contained in $[0,1)$.
Hence the series expression
$$\log^2\left(\frac{1+R^{1/2}}{1-R^{1/2}}\right)=4R\cdot\left(\sum_{m=0}^\infty\frac{R^m}{2m+1}\right)^2$$
converges absolutely to a matrix in $\mathrm{Mat}_{p\times p}(\Complex)$.
The Siegel distance between the pair of points $Z,W\in\mathfrak{H}_p$ can be calculated as
\begin{equation}\label{d_S_global}
d_{\mathtt{S}}(Z,W)
=\sqrt{\mathrm{tr}\left(\log^2\left(\frac{1+R^{1/2}}{1-R^{1/2}}\right)\right)}.
\end{equation}
See \cite[Chapter III, \S 13]{Siegel_book}.

The symplectic group $\mathrm{Sp}(2p,\Real)$ acts on $\mathfrak{H}_p$
by generalized fractional linear transformations, as follows.
Any matrix $\varphi\in\mathrm{Sp}(2p,\Real)$ can be divided into four blocks
$A,B,C,D\in\mathrm{Mat}_{p\times p}(\Real)$,
such that the basis transforms as
$$\varphi_*(\begin{array}{cccccc}\vec{x}_1&\cdots&\vec{x}_p&\vec{y}_1&\cdots&\vec{y}_p\end{array})=
(\begin{array}{cccccc}\vec{x}_1&\cdots&\vec{x}_p&\vec{y}_1&\cdots&\vec{y}_p\end{array})
\left(\begin{array}{cc} A& B\\ C& D\end{array}\right).$$
Then $\varphi$ being symplectic is equivalent to the conditions
$A^\dagger C=C^\dagger A$, and $D^\dagger B=B^\dagger D$ and
$A^\dagger D-C^\dagger B=I$ altogether
($I$ denoting the identity matrix).
The action of $\mathrm{Sp}(2p,\Real)$ on $\mathfrak{H}_p$
is explicitly
$$\varphi\colon Z\mapsto (AZ+B)\cdot (CZ+D)^{-1}$$
for any $\varphi\in\mathrm{Sp}(2p,\Real)$ and $Z\in\mathfrak{H}_p$.
See \cite[Chapter II, \S\S 4--5]{Siegel_book}.

\begin{remark}
For any real vector space $V$ of dimension $2p$
furnished with a symplectic form $\omega$, 
an identification
$(\mathfrak{H}(V),\mathrm{Sp}(V))\cong(\mathfrak{H}_p,\mathrm{Sp}(2p,\Real))$
is canonically determined by fixing a symplectic basis of $V$.
Since the canonical identifications $V\cong\Real^{2p}$ and $\mathrm{Sp}(V)\cong\mathrm{Sp}(2p,\Real)$ are obvious,
it remains to identify any point $Z\in\mathfrak{H}_p$ with a Hodge structure on $V\otimes_\Real\Complex\cong\Complex^{2p}$.
We first require the distinguished point $Z=\sqrt{-1}\cdot I$ 
to be identified with the Hodge structure $V^{1,0}\oplus V^{0,1}$
where $V^{1,0}$ is the real linear subspace spanned by the complex vectors:
$$\begin{array}{ccc}\vec{z}_1=\vec{x}_1+\sqrt{-1}\cdot \vec{y}_1,&\cdots,&\vec{z}_p=\vec{x}_p+\sqrt{-1}\cdot\vec{y}_p\end{array}$$
Hence, $V^{0,1}=\overline{V^{1,0}}$.
In general, any point $X+\sqrt{-1}\cdot Y$ in $\mathfrak{H}_p$
is the transformation image of $\sqrt{-1}\cdot I$ under the symplectic matrix
$$M=
\left(\begin{array}{cc} I& X\\ 0& I\end{array}\right)\cdot
\left(\begin{array}{cc} Y^{1/2}& 0\\ 0& Y^{-1/2}\end{array}\right)=
\left(\begin{array}{cc} Y^{1/2}& X\cdot Y^{-1/2}\\ 0& Y^{-1/2}\end{array}\right),$$
so the Hodge structure corresponding to $Z=X+\sqrt{-1}\cdot Y$
has $V^{1,0}$ spanned by $M\vec{z}_1,\cdots,M\vec{z}_p$.
\end{remark}

\subsection{Translation length}
\begin{definition}\label{ell_X_def}
For any metric space $({\mathcal{X}},d_{\mathcal{X}})$, the \emph{translation length} of an isometry
$\sigma\colon {\mathcal{X}}\to {\mathcal{X}}$
is defined as
$$\tlen_{\mathcal{X}}(\sigma)=\inf_{x\in {\mathcal{X}}} d_{\mathcal{X}}(x,\sigma.x).$$
\end{definition}

The translation length can be thought of as a function on the isometry group of ${\mathcal{X}}$,
$$\tlen_{\mathcal{X}}\colon \mathrm{Isom}({\mathcal{X}})\to [0,+\infty).$$
This function is clearly invariant under conjugations and inversion.
Moreover, it satisfies the following upper semi-continuity
with respect to sequences of pointwise convergence.

\begin{lemma}\label{tlen_upper_semicontinuous}
	If $(\sigma_m)_{m\in\Natural}$ is a sequence of isometries on 
	a metric space $({\mathcal{X}},d_{\mathcal{X}})$ that converges to an isometry $\varphi_\infty$ everywhere in ${\mathcal{X}}$,
	then
	$$\varlimsup_{m\to\infty} \tlen_{\mathcal{X}}(\sigma_m)\leq \tlen_{\mathcal{X}}(\sigma_\infty).$$
\end{lemma}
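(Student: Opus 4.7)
The plan is to unwind the infimum defining the translation length of $\sigma_\infty$ and exploit pointwise convergence at a single test point, combined with the continuity of the distance function. The key observation is that the translation length is defined as an infimum, so to bound $\varlimsup_m \tlen_{\mathcal{X}}(\sigma_m)$ from above, it suffices to exhibit, for each $\epsilon>0$, a single witness point $x_0$ whose displacement under $\sigma_m$ eventually lies within $\epsilon$ of $\tlen_{\mathcal{X}}(\sigma_\infty)$.

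First I would fix $\epsilon > 0$ and use the definition $\tlen_{\mathcal{X}}(\sigma_\infty) = \inf_{x \in \mathcal{X}} d_{\mathcal{X}}(x, \sigma_\infty.x)$ to select a point $x_0 \in \mathcal{X}$ with
$$d_{\mathcal{X}}(x_0, \sigma_\infty.x_0) < \tlen_{\mathcal{X}}(\sigma_\infty) + \epsilon.$$
Next, for each $m \in \Natural$, the defining infimum for $\sigma_m$ gives the elementary upper bound
$$\tlen_{\mathcal{X}}(\sigma_m) \leq d_{\mathcal{X}}(x_0, \sigma_m.x_0).$$
By the assumed pointwise convergence $\sigma_m.x_0 \to \sigma_\infty.x_0$ and the (Lipschitz) continuity of the distance function $d_{\mathcal{X}}(x_0,\_)$, we obtain
$$\lim_{m \to \infty} d_{\mathcal{X}}(x_0, \sigma_m.x_0) = d_{\mathcal{X}}(x_0, \sigma_\infty.x_0).$$

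Combining these ingredients yields $\varlimsup_{m \to \infty} \tlen_{\mathcal{X}}(\sigma_m) \leq d_{\mathcal{X}}(x_0, \sigma_\infty.x_0) < \tlen_{\mathcal{X}}(\sigma_\infty) + \epsilon$, and letting $\epsilon \to 0^+$ finishes the argument. There is no real obstacle here; this lemma is essentially a formal property of upper semi-continuity for infima of families of continuous functions, and the isometry hypothesis is used only implicitly through the fact that each $\sigma_m$ and $\sigma_\infty$ genuinely act on $\mathcal{X}$ so that $\tlen_{\mathcal{X}}$ is defined. No completeness, properness, or non-positive curvature assumption on $\mathcal{X}$ is needed.
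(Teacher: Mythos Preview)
Your proof is correct and follows essentially the same approach as the paper's own proof: both bound $\tlen_{\mathcal{X}}(\sigma_m)$ above by the displacement $d_{\mathcal{X}}(x,\sigma_m.x)$ at a test point, use pointwise convergence to pass to $d_{\mathcal{X}}(x,\sigma_\infty.x)$, and then take the infimum over $x$ (you choose an $\epsilon$-approximate minimizer $x_0$ up front, while the paper keeps $x$ arbitrary and takes the infimum at the end, but this is only a cosmetic reordering).
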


\begin{proof}
	For any $x\in {\mathcal{X}}$ and $\epsilon>0$, we obtain
	$d_{\mathcal{X}}(x,\sigma_m.x)\leq d_{\mathcal{X}}(x,\sigma_\infty.x)+\epsilon$ for all sufficiently large $m$.
	Then $\tlen_{\mathcal{X}}(\sigma_m)\leq d_{\mathcal{X}}(x,\sigma_\infty.x)+\epsilon$, and then
	$\varlimsup_{m\to\infty} \tlen_{\mathcal{X}}(\sigma_m)\leq d_{\mathcal{X}}(x,\sigma_\infty.x)+\epsilon$.
	Taking the infimum over all $x\in {\mathcal{X}}$, we obtain
	$\varlimsup_{m\to\infty} \tlen_{\mathcal{X}}(\sigma_m)\leq \tlen_{\mathcal{X}}(\sigma_\infty)+\epsilon$.
	Let $\epsilon$ tend to $0+$, the asserted inequality follows.
\end{proof}

\section{The Siegel translation length of symplectic linear transformations}\label{Sec-Siegel_tl}
In this section, we figure out the translation length of a symplectic matrix
with respect to its fractional linear transformation on the generalized upper half plane
and the Siegel metric.
This is not a difficult task, 
the most importantly because symplectic linear transformations
have been well understood. 
In fact, conjugacy classes in $\mathrm{Sp}(2p,\Real)$ have been classified
with normal forms all listed. The theory is similar to 
the well-known Jordan normal form theory for $\mathrm{SL}(n,\Real)$,
although notationally more involved.
For details of that theory, we refer to J.~Gutt \cite{Gutt_normal_form}.
However, 
we need a few ingredients from \cite{Gutt_normal_form} 
in our proof of Theorem \ref{tlen_Siegel},
and we elaborate when we use them.

\begin{theorem}\label{tlen_Siegel}
	Let $(V,\omega)$ be a finite-dimensional real symplectic vector space.
	Then,	for any symplectic linear transformation
	$\varphi\in\mathrm{Sp}(V)$,
	the Siegel translation length of $\varphi$ satisfies the formula
	$$\tlen_{\mathtt{S}}(\varphi)=2\times\sqrt{\mathbf{w}(P_\varphi)}$$
	where $P_\varphi$ denotes the characteristic polynomial of $\varphi$,
	and where $\mathbf{w}$ denotes the Jensen square sum as defined in (\ref{w_def}).
\end{theorem}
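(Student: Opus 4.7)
First, I would reduce the formula to a short list of elementary symplectic blocks via normal-form theory, and then verify each case by a direct computation in the generalized upper half plane. Specifically, I would invoke the multiplicative Jordan--Chevalley decomposition $\varphi=\varphi_s\cdot\varphi_u$ in the algebraic group $\mathrm{Sp}(V)$, with $\varphi_s$ semisimple, $\varphi_u$ unipotent, and the two commuting. Since $P_\varphi=P_{\varphi_s}$, one has $\mathbf{w}(P_\varphi)=\mathbf{w}(P_{\varphi_s})$. In the Hadamard manifold $(\mathfrak{H}_p,g_{\mathtt{S}})$, the translation length of any isometry $\psi$ coincides with the stable rate $\lim_{n\to\infty} d_{\mathtt{S}}(\mathbf{o},\psi^n\cdot\mathbf{o})/n$ at any basepoint $\mathbf{o}$ (by Fekete subadditivity plus the Hadamard property). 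For the unipotent $\varphi_u$, the matrix-norm bound $\|\varphi_u^n\|=O(n^{2p})$, combined with the elementary estimate $d_{\mathtt{S}}(\mathbf{o},g\cdot\mathbf{o})=O(\log\|g\|)$ derived from (\ref{d_S_global}), yields $d_{\mathtt{S}}(\mathbf{o},\varphi_u^n\cdot\mathbf{o})=O(\log n)$. Applied to $\varphi^n=\varphi_s^n\varphi_u^n$ via the triangle inequality, this forces $\tlen_{\mathtt{S}}(\varphi)=\tlen_{\mathtt{S}}(\varphi_s)$, and so I may assume $\varphi$ is semisimple.

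For a semisimple $\varphi$, the eigenvalues partition into $\mathrm{Sp}$-invariant packets: real reciprocal pairs $\{\lambda,\lambda^{-1}\}$, unimodular conjugate pairs $\{e^{\pm\sqrt{-1}\theta}\}$ with $\theta\in(0,\pi)$, and complex quadruples $\{\lambda,\bar\lambda,\lambda^{-1},\bar\lambda^{-1}\}$ with $|\lambda|\neq 1$ and $\lambda\notin\Real$. The corresponding symplectic-orthogonal eigenspace decomposition $V=\bigoplus_i V_i$, with each $V_i$ an elementary block of real dimension $2$ or $4$, is standard and may be read off from \cite{Gutt_normal_form}. A symplectic basis of $V$ adapted to this decomposition realizes $\prod_i\mathfrak{H}(V_i)$ as the block-diagonal locus inside $\mathfrak{H}_p$; inspection of (\ref{g_S_def}) shows that the Siegel metric restricts to the product of the factor metrics, so the embedding is totally geodesic. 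Together with the stable rate formula at a product basepoint, this yields additivity of $\tlen_{\mathtt{S}}(\varphi)^2$ over the blocks, matching the obvious additivity $\mathbf{w}(P_\varphi)=\sum_i\mathbf{w}(P_{\varphi|V_i})$. It therefore suffices to verify the formula on each elementary block.

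Each block is a short direct calculation. For a real reciprocal pair the element is conjugate in $\mathrm{SL}(2,\Real)$ to $z\mapsto\lambda^2 z$ on $\mathfrak{H}_1$, which is the Poincar\'e upper half plane (the Siegel metric on $\mathfrak{H}_1$ reduces to $(dx^2+dy^2)/y^2$); this gives $\tlen_{\mathtt{S}}=2\log|\lambda|=2\sqrt{\mathbf{w}}$, with both sides vanishing in the degenerate case $\lambda=\pm 1$. For a unimodular pair, the element is conjugate into $\mathrm{SO}(2)\subset\mathrm{SL}(2,\Real)$ and fixes the basepoint, giving $0=0$. For a complex quadruple with $\lambda=re^{\sqrt{-1}\theta}$ ($r>1$), I would take the symplectic representative $\varphi=\mathrm{diag}(A,(A^\dagger)^{-1})\in\mathrm{Sp}(4,\Real)$ where $A\in\mathrm{GL}(2,\Real)$ equals $r$ times the planar rotation by $\theta$; then $\varphi\cdot Z=AZA^\dagger$, the basepoint orbit is $\varphi^n\cdot\sqrt{-1}\cdot I_2=\sqrt{-1}\cdot r^{2n}I_2$, tracing a geodesic $c(s)=\sqrt{-1}\cdot e^{2s}I_2$ along which (\ref{g_S_def}) gives $\|c'\|_{\mathtt{S}}=2\sqrt{2}$. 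Hence $d_{\mathtt{S}}(\mathbf{o},\varphi^n\cdot\mathbf{o})=2\sqrt{2}\,n\log r$ (cross-checkable via (\ref{d_S_global}), where the cross-ratio becomes $R=((r^{2n}-1)/(r^{2n}+1))^2\cdot I_2$), yielding $\tlen_{\mathtt{S}}(\varphi)=2\sqrt{2}\log r=2\sqrt{2\log^2 r}=2\sqrt{\mathbf{w}(P_\varphi)}$.

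The genuinely subtle step will be the first: reducing $\varphi$ to $\varphi_s$ via the stable rate formula requires the Hadamard geometry of $\mathfrak{H}_p$ together with a quantitative polynomial bound on the displacement of unipotent iterates, and I expect this step to demand the most careful writing. The remainder of the argument is a routine unwinding of the symplectic normal forms through the explicit upper half plane model.
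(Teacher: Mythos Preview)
Your proposal is correct and follows the same three–part architecture as the paper: reduce to the semisimple case, establish a Pythagorean additivity over an invariant symplectic–orthogonal decomposition, and verify each elementary $2$– or $4$–dimensional block directly in the upper half plane model. The block computations and the totally–geodesic product argument are essentially identical to the paper's Lemmas~\ref{Siegel_Pythagorean} and~\ref{semisimple_tlen_Siegel}.

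The genuine difference lies in how you reduce to the semisimple factor. The paper (Lemma~\ref{semisimple_tlen_suffices}) argues via upper semicontinuity of $\tlen_{\mathtt{S}}$ under pointwise convergence: one inequality comes from the one–parameter family $\varphi_{\mathtt{ss}}\varphi_{\mathtt{u}}^{r}$, $r\to 0^+$, all conjugate to $\varphi$ by Gutt's classification; the reverse inequality comes from approximating $\varphi$ by generic semisimple elements and applying the already–proved semisimple formula. Your route instead invokes the equality of translation length with the stable rate $\lim_n d_{\mathtt{S}}(\mathbf{o},\psi^n\mathbf{o})/n$ in the Hadamard manifold $\mathfrak{H}_p$, together with the polynomial growth $\|\varphi_{\mathtt{u}}^n\|=O(n^{2p})$ and the estimate $d_{\mathtt{S}}(\mathbf{o},g\cdot\mathbf{o})=O(\log\|g\|)$, which forces the unipotent contribution to vanish in the limit. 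Your argument is arguably more direct and avoids the conjugacy claim for $\varphi_{\mathtt{ss}}\varphi_{\mathtt{u}}^r$, which in the paper leans on the full normal–form classification of \cite{Gutt_normal_form}; the paper's argument, in exchange, stays closer to first principles once the elementary Lemma~\ref{tlen_upper_semicontinuous} is in hand, and does not need the stable–length identity or the quantitative displacement bound. Both are short and standard for a symmetric space of noncompact type; your instinct that the semisimple reduction is the step requiring the most care is exactly right.
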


The rest of this section is devoted to the proof of Theorem \ref{tlen_Siegel}.

\begin{lemma}\label{Siegel_Pythagorean}
	If $V=V_1\oplus\cdots\oplus V_k$ is a symplectic orthogonal decomposition
	of $V$ into symplectic summands $V_i$ that are invariant under $\varphi$.
	Then
	$$\tlen_{\mathtt{S}}(\varphi)=\sqrt{\tlen_{\mathtt{S}}(\varphi_1)^2+\cdots+\tlen_{\mathtt{S}}(\varphi_k)^2}$$
	where $\varphi_i\in\mathrm{Sp}(V_i)$ denotes the restricted transformation of $\varphi$.
\end{lemma}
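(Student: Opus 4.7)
The plan is to realize $\mathfrak{H}(V_1) \times \cdots \times \mathfrak{H}(V_k)$ as a $\varphi$-invariant, distance-preserving submanifold of $\mathfrak{H}(V)$ on which $\varphi$ restricts to $\varphi_1 \times \cdots \times \varphi_k$, and then derive the Pythagorean identity from the splitting of the Siegel distance formula.

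First, I would concatenate symplectic bases of the summands $V_i$ into a symplectic basis of $V$ adapted to the orthogonal decomposition. This identifies $\mathfrak{H}(V) \cong \mathfrak{H}_p$ and each $\mathfrak{H}(V_i) \cong \mathfrak{H}_{p_i}$, where $2p_i = \dim V_i$ and $p = p_1 + \cdots + p_k$, and produces a canonical block-diagonal embedding
$$\iota\colon \mathfrak{H}_{p_1} \times \cdots \times \mathfrak{H}_{p_k} \to \mathfrak{H}_p, \qquad (Z_1, \ldots, Z_k) \mapsto \mathrm{diag}(Z_1, \ldots, Z_k).$$
Since $\varphi$ preserves each summand $V_i$, the four $p \times p$ blocks $A, B, C, D$ of its symplectic matrix are themselves block-diagonal with respect to the decomposition, so the generalized fractional linear action of $\varphi$ on $\mathfrak{H}_p$ preserves the block-diagonal locus and restricts there to $\varphi_1 \times \cdots \times \varphi_k$.

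Second, I would apply the global distance formula (\ref{d_S_global}) to check that $\iota$ splits the Siegel distance in a Pythagorean manner. For $Z = \mathrm{diag}(Z_i)$ and $W = \mathrm{diag}(W_i)$, the cross-ratio matrix $R(Z, W) = (Z - W)(Z - \bar{W})^{-1}(\bar{Z} - \bar{W})(\bar{Z} - W)^{-1}$ is block-diagonal with $i$-th block $R(Z_i, W_i)$. The principal square root $R^{1/2}$, the Cayley transform $(1 + R^{1/2})/(1 - R^{1/2})$, and its squared logarithm are then block-diagonal as well, so taking traces yields
$$d_{\mathtt{S}}(\iota(Z), \iota(W))^2 = \sum_{i=1}^k d_{\mathtt{S}}(Z_i, W_i)^2.$$

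Third, I would invoke the standard asymptotic formula $\tlen_{\mathtt{S}}(\sigma) = \lim_{n \to \infty} d_{\mathtt{S}}(x, \sigma^n.x)/n$, which is valid in any Hadamard manifold for every isometry $\sigma$ and every basepoint $x$ (see \cite[Chapter II.6]{Bridson--Haefliger_book}). Applying this at $x = Z_0 = \iota(Z_{0,1}, \ldots, Z_{0,k})$ with $\sigma = \varphi$, and at each $Z_{0,i}$ with $\sigma = \varphi_i$, and combining with the split distance formula from the previous step, we obtain
$$\tlen_{\mathtt{S}}(\varphi)^2 = \lim_{n \to \infty} \frac{d_{\mathtt{S}}(Z_0, \varphi^n.Z_0)^2}{n^2} = \sum_{i=1}^k \lim_{n \to \infty} \frac{d_{\mathtt{S}}(Z_{0,i}, \varphi_i^n.Z_{0,i})^2}{n^2} = \sum_{i=1}^k \tlen_{\mathtt{S}}(\varphi_i)^2,$$
as asserted. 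The only nontrivial ingredient is the block-diagonal decomposition of the Siegel cross-ratio expression, which is routine matrix algebra, so I do not anticipate any serious obstacle.
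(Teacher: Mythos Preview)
Your proof is correct and reaches the same conclusion as the paper's, but by a different mechanism. Both arguments embed the product $\mathfrak{H}(V_1)\times\cdots\times\mathfrak{H}(V_k)$ as the block-diagonal locus in $\mathfrak{H}(V)\cong\mathfrak{H}_p$ and note that $\varphi$ preserves it, acting there as $\varphi_1\times\cdots\times\varphi_k$. The paper then observes from the metric tensor (\ref{g_S_def}) that this embedding is isometric with totally geodesic image, and uses the $\varphi$-equivariant, distance-nonincreasing nearest-point projection of the Hadamard manifold $\mathfrak{H}(V)$ onto it to conclude that the infimum defining $\tlen_{\mathtt{S}}(\varphi)$ is already witnessed on the product, where it evidently equals the Pythagorean combination. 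You instead check via the global distance formula (\ref{d_S_global}) that distances split Pythagoreanly along the block-diagonal locus, and then invoke the stable translation length identity $\tlen_{\mathtt{S}}(\sigma)=\lim_{n}d_{\mathtt{S}}(x,\sigma^n.x)/n$ at a block-diagonal basepoint. Your route sidesteps total geodesy and the projection lemma, at the price of importing the stable-equals-infimal translation length fact for CAT(0) isometries; this is true and standard, though it is not literally a numbered statement in \cite[Chapter II.6]{Bridson--Haefliger_book}, so you may want a more precise citation or a one-line justification via convexity of the displacement function. The paper's route has the minor advantage of reusing the same nearest-point-projection device that recurs in the proof of Lemma~\ref{semisimple_tlen_Siegel}.
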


\begin{proof}
	The decomposition induces a natural inclusion
	$\mathfrak{H}(V_1)\times\cdots\times\mathfrak{H}(V_k)$
	into $\mathfrak{H}(V)$.
	With respect to the cartesian product of Siegel metrics on
	$\mathfrak{H}(V_1)\times\cdots\times\mathfrak{H}(V_k)$
	and the Siegel metric on $\mathfrak{H}(V)$,
	the inclusion is an isometric embedding
	with totally geodesic image,
	by (\ref{g_S_def}) and simple observation.
	Since the Siegel space $\mathfrak{H}(V)$ is Hadamard with respect to the Siegel metric,
	the closest point projection  
	$\mathfrak{H}(V)\to \mathfrak{H}(V_1)\times\cdots\times\mathfrak{H}(V_k)$
	is well-defined and distance non-increasing;
	see \cite[Chapter II, Corollary 2.5]{Bridson--Haefliger_book}.
	It follows that $\tlen_{\mathtt{S}}(\varphi)$ 
	is witnessed by some sequence of points 
	in $\mathfrak{H}(V_1)\times\cdots\times\mathfrak{H}(V_k)$,
	so it agrees with the translation distance of $\varphi$
	acting on $\mathfrak{H}(V_1)\times\cdots\times\mathfrak{H}(V_k)$.
	The latter is evidently equal to
	$(\tlen_{\mathtt{S}}(\varphi_1)^2+\cdots+\tlen_{\mathtt{S}}(\varphi_k)^2)^{1/2}$.
\end{proof}

Let $(V,\omega)$ be a finite-dimensional real symplectic vector space.
For any element $\varphi\in\mathrm{Sp}(V)$, 
it follows from the Jordan--Chevalley decomposition
that there is a unique, commutative factorization of $\varphi$ in $\mathrm{Sp}(V)$,
$$\varphi=\varphi_{\mathtt{ss}}\varphi_{\mathtt{u}}=\varphi_{\mathtt{u}}\varphi_{\mathtt{ss}},$$
such that the eigenvectors of $\varphi_{\mathtt{ss}}$ span $V\otimes_\Real\Complex$,
and the only eigenvalue of $\varphi_{\mathtt{u}}$ is $1$.
In fact, the Jordan--Chevalley decomposition in
the semisimple real Lie algebra $\mathfrak{sp}(V)$ is the same as
inherited from $\mathfrak{gl}(V)$, and it detemines the factorization in 
the linear group $\mathrm{Sp}(V)$; see \cite[Chapter II, \S 6.4]{Humphreys_book}.

We refer to $\varphi_{\mathtt{ss}}$ and $\varphi_{\mathtt{u}}$
as the \emph{semisimple} factor and the \emph{unipotent} factor of $\varphi$,
respectively.

\begin{lemma}\label{semisimple_tlen_Siegel}
	Theorem \ref{tlen_Siegel} holds if $\varphi=\varphi_{\mathtt{ss}}$.
\end{lemma}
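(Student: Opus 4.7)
The plan is to use Lemma \ref{Siegel_Pythagorean} to reduce to minimal $\varphi$-invariant symplectic summands of $V$, one for each orbit of eigenvalues of $\varphi$ under the Klein four-group generated by $\lambda \mapsto \bar\lambda$ and $\lambda \mapsto 1/\lambda$. Because $\varphi$ preserves $\omega$, two eigenvectors with eigenvalues $\lambda,\mu$ pair nontrivially under $\omega$ only when $\lambda\mu = 1$; since each orbit is closed under inversion, the real parts of complex eigenspaces for distinct orbits are symplectically orthogonal, and the sum over orbits gives a $\varphi$-invariant symplectic decomposition of $V$. This decomposition respects $\mathbf{w}$, in the sense that $\mathbf{w}(P_\varphi) = \sum_i \mathbf{w}(P_{\varphi_i})$, so it suffices to verify the identity on each block.

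On each block I would work directly in the generalized upper half plane model. When the orbit is $\{1\}$ or $\{-1\}$, in the semisimple case $\varphi$ restricts to $\pm I$, both of which act trivially on the Siegel space; when the orbit is $\{e^{\pm i\theta}\}$ with $\theta \in (0,\pi)$, $\varphi$ is conjugate in $\mathrm{Sp}$ to a rotation and hence fixes a point of $\mathfrak{H}_1$. In all unit-modulus cases both sides equal zero. For a real orbit $\{\lambda, 1/\lambda\}$ with $|\lambda|>1$, the block is two-dimensional and in a symplectic eigenbasis $\varphi$ acts on $\mathfrak{H}_1$ as the hyperbolic M\"obius transformation $z \mapsto \lambda^2 z$, whose axis is the imaginary axis and whose translation length is $2\log|\lambda|$, matching $2\sqrt{\mathbf{w}} = 2\log|\lambda|$. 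The remaining case is a quadruple $\{\lambda, \bar\lambda, 1/\lambda, 1/\bar\lambda\}$ with $\lambda = re^{i\theta}$, $r>1$, $\theta \in (0,\pi)$. Here I would exhibit the block in the four-dimensional symplectic normal form $\varphi = \mathrm{diag}(rM_2(\theta),\, r^{-1}M_2(\theta))$, where $M_2(\theta)$ denotes the standard planar rotation by angle $\theta$; a direct check using $M_2(\theta)^\dagger M_2(\theta) = I$ shows this matrix is symplectic with the correct spectrum. It sends $Z = iy\,I \in \mathfrak{H}_2$ to $ir^2 y\,I$, and along the totally geodesic diagonal flat $\{iy\,I : y>0\}$ the Siegel metric pulls back to $2\,(d\log y)^2$, yielding translation length $2\sqrt{2}\,\log r$, which equals $2\sqrt{\mathbf{w}} = 2\sqrt{2\log^2 r}$.

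Putting the blocks together via Lemma \ref{Siegel_Pythagorean} gives $\tlen_{\mathtt{S}}(\varphi)^2 = 4 \sum_i \mathbf{w}(P_{\varphi_i}) = 4\,\mathbf{w}(P_\varphi)$, which is the asserted identity. The step that requires the most care is the four-dimensional complex-quadruple block: one has to identify the correct real symplectic normal form (available from Gutt's classification) and verify that the diagonal flat exhibits an axis realizing the infimum. Once the flat is seen to be totally geodesic and $\varphi$ acts on it as a genuine translation along a line, the Hadamard property of $(\mathfrak{H}_p, g_{\mathtt{S}})$ guarantees that the translation length is attained there, just as used in the proof of Lemma \ref{Siegel_Pythagorean}.
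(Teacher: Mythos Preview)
Your approach is essentially the paper's: reduce via Lemma \ref{Siegel_Pythagorean} to symplectic simple summands, classify these into the four types (central, elliptic, real hyperbolic, $4$-dimensional loxodromic), and handle the loxodromic block using the same normal form $\mathrm{diag}(rM_2(\theta),\,r^{-1}M_2(\theta))$ and the same $\varphi$-invariant geodesic $\{iyI\}\subset\mathfrak{H}_2$, with the Hadamard closest-point projection guaranteeing the infimum is realized there. The only imprecision is the phrase ``one for each orbit of eigenvalues'': when an eigenvalue occurs with multiplicity greater than one there are several simple summands for the same orbit, but since you already wrote ``minimal $\varphi$-invariant symplectic summands'' this is a labeling slip rather than a gap.
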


\begin{proof}
	We say that $(V,\varphi)$ is \emph{symplectic simple} if $\varphi=\varphi_{\mathtt{ss}}$,
	and if $V$ contains no nontrivial $\varphi$--invariant symplectic subspaces.
	In this case, $(V,\varphi)$ decomposes as a direct sum
	of symplectic simple transformations $(V_i,\varphi_i)$.
	In view of the Pythagorean formula (Lemma \ref{Siegel_Pythagorean}) 
	for $\tlen_{\mathtt{S}}(\varphi)$ 
	and the obvious similar formula	for $2\times\sqrt{\mathbf{w}(P_{\varphi})}$,
	it suffices to prove the lemma for any symplectic simple transformation.

	Any symplectic simple transformation $(V,\varphi)$ 
	is either $2$--dimensional central (with a unique eigenvalue $\lambda=\pm1$ of multiplicity $2$),
	or $2$--dimensional hyperbolic (with distinct real eigenvalues $\{\lambda,1/\lambda\}$),
	or $2$--dimensional elliptic (with distinct unimodular eigenvalues $\{\lambda,\bar\lambda\}$),
	or otherwise, $4$--dimensional `loxodromic'
	(with distinct complex eigenvalues $\{\lambda,\bar\lambda,1/\lambda,1/\bar\lambda\}$
	off the unit circle and the real axis).
	See \cite[Theorem 1.2]{Gutt_normal_form} for general normal forms
	of matrices in $\mathrm{Sp}(2p,\Real)$.
	
	When $V$ is $2$--dimensional,
	the formula for	$\tlen_{\mathtt{S}}(\varphi)$ is actually well-known.
	In this case, the Siegel metric on Siegel space $\mathfrak{H}(V)$ agrees
	with the hyperbolic metric on the upper-half complex plane
	(of constant curvature $-1$),
	and $\mathrm{Sp}(V)\cong\mathrm{SL}(2,\Real)$ acts the same way
	as the fractional linear transformations,
	so 
	$$\tlen_{\mathtt{S}}(\varphi)=
	\begin{cases}
	0& \varphi\mbox{ central/elliptic}
	\\
	|2\log|\lambda||& \varphi\mbox{ hyperbolic}
	\end{cases}.$$
	In any of the subcases, $\tlen_{\mathtt{S}}(\varphi)$ is equal to
	$2\times\sqrt{\mathbf{w}(P_\varphi)}$, as asserted.
	
	When $V$ is $4$--dimensional, 
	the Siegel space $\mathfrak{H}(V)$ can be identified
	with the generalized upper half plane $\mathfrak{H}_2$,
	consisting of all the symmetric complex $2\times2$--matrices
	with positive definite imaginary part.
	Any symplectic transformation 
	with distinct eigenvalues $\{\lambda,\bar\lambda,1/\lambda,1/\bar\lambda\}$
	can be conjugated in $\mathrm{Sp}(4,\Real)$
	into the normal form
	$$
	\varphi=\left(\begin{array}{cc} A&B\\C&D\end{array}\right)
	=
	\left(\begin{array}{cc} 
	\rho\cdot\left({\begin{array}{cc} \cos\theta & -\sin\theta \\ \sin\theta & \cos\theta\end{array}}\right) & \\
	& \rho^{-1}\cdot\left({\begin{array}{cc} \cos\theta & -\sin\theta \\ \sin\theta & \cos\theta\end{array}}\right)
	\end{array}\right)
	$$
	denoting $\rho=|\lambda|$ and $\theta=\arg \lambda$.
	Recall that $\varphi$ acts isometrically on $\mathfrak{H}_2$
	as $Z\mapsto (AZ+B)(CZ+D)^{-1}$.
	The purely positive imaginary scalar $2\times2$--matrices form 
	a $\varphi$--invariant geodesic $l$
	in the Hadamard manifold $\mathfrak{H}_2$.
	As the closest point projection $\mathfrak{H}_2\to l$ 
	is distance non-increasing 
	\cite[Chapter II, Corollary 2.5]{Bridson--Haefliger_book},
	the translation length of $\varphi$ is realized at every point on $l$.
	On $l$ we can apply (\ref{d_S_global}) and compute directly
	$$\tlen_{\mathtt{S}}(\varphi)=\sqrt{2}\cdot \left|2\log |\lambda|\right|,$$
	which is also $2\times \sqrt{\mathbf{w}(P_\varphi)}$, as asserted.
\end{proof}

\begin{lemma}\label{semisimple_tlen_suffices} 
$$\tlen_{\mathtt{S}}(\varphi)=\tlen_{\mathtt{S}}(\varphi_{\mathtt{ss}})$$
\end{lemma}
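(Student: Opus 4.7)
The plan is to exploit the commutativity $\varphi=\varphi_{\mathtt{ss}}\varphi_{\mathtt{u}}=\varphi_{\mathtt{u}}\varphi_{\mathtt{ss}}$, which implies $\varphi^{n}=\varphi_{\mathtt{ss}}^{n}\varphi_{\mathtt{u}}^{n}$ for every $n\in\Natural$, and then to show that the unipotent iterates $\varphi_{\mathtt{u}}^{n}$ contribute only sublinearly to the Siegel displacement. Since $(\mathfrak{H}(V),d_{\mathtt{S}})$ is a Hadamard manifold, for any isometry $\psi$ of $\mathfrak{H}(V)$ the translation length agrees with the stable translation length (cf.\ \cite[Chapter II, \S 6]{Bridson--Haefliger_book}); that is,
$$\tlen_{\mathtt{S}}(\psi)=\lim_{n\to\infty}\frac{d_{\mathtt{S}}(x,\psi^{n}.x)}{n}$$
for any (hence every) $x\in\mathfrak{H}(V)$.

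Fix any basepoint $x\in\mathfrak{H}(V)$. Since $\varphi_{\mathtt{ss}}^{n}$ acts isometrically and $\varphi^{n}.x=\varphi_{\mathtt{ss}}^{n}.(\varphi_{\mathtt{u}}^{n}.x)$, the triangle inequality provides the two-sided comparison
$$\bigl|d_{\mathtt{S}}(x,\varphi^{n}.x)-d_{\mathtt{S}}(x,\varphi_{\mathtt{ss}}^{n}.x)\bigr|\leq d_{\mathtt{S}}\bigl(\varphi_{\mathtt{ss}}^{n}.x,\,\varphi_{\mathtt{ss}}^{n}.(\varphi_{\mathtt{u}}^{n}.x)\bigr)=d_{\mathtt{S}}(x,\varphi_{\mathtt{u}}^{n}.x).$$
Dividing by $n$ and passing to the limit, one obtains $\tlen_{\mathtt{S}}(\varphi)=\tlen_{\mathtt{S}}(\varphi_{\mathtt{ss}})$ the moment one knows the sublinear bound $d_{\mathtt{S}}(x,\varphi_{\mathtt{u}}^{n}.x)=o(n)$.

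For this key estimate I would identify $(\mathfrak{H}(V),\mathrm{Sp}(V))$ with $(\mathfrak{H}_{p},\mathrm{Sp}(2p,\Real))$ by fixing a symplectic basis of $V$, take $x=\sqrt{-1}\cdot I$, and write $\varphi_{\mathtt{u}}=\exp(N)$ for a nilpotent $N\in\mathfrak{sp}(2p,\Real)$. Then the block entries $A_{n},B_{n},C_{n},D_{n}$ of $\varphi_{\mathtt{u}}^{n}=\exp(nN)$ are polynomial in $n$, so that $\varphi_{\mathtt{u}}^{n}.x=(A_{n}\sqrt{-1}+B_{n})(C_{n}\sqrt{-1}+D_{n})^{-1}$ has entries of at most polynomial growth in $\mathrm{Sym}_{p\times p}(\Complex)$. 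Feeding this polynomial dependence into the generalized cross-ratio $R(x,\varphi_{\mathtt{u}}^{n}.x)$ and then into the explicit distance formula (\ref{d_S_global}), a direct asymptotic expansion gives $d_{\mathtt{S}}(x,\varphi_{\mathtt{u}}^{n}.x)=O(\log n)$, which is well within the required $o(n)$ control.

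The main obstacle is precisely the above sublinear displacement estimate for unipotent symplectic matrices. Heuristically it is unsurprising, because $\varphi_{\mathtt{u}}$ acts on the symmetric space $\mathfrak{H}(V)$ as a parabolic isometry (its only eigenvalue being $1$), and parabolic isometries of symmetric spaces of noncompact type are known to have vanishing stable translation length. A self-contained proof, however, requires either the asymptotic analysis of the cross-ratio matrix sketched above, or a case-by-case verification against the unipotent normal forms for $\mathrm{Sp}(2p,\Real)$ classified in \cite{Gutt_normal_form}; either route, combined with the triangle inequality above, completes the argument.
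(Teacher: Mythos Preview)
Your approach is sound and genuinely different from the paper's. You exploit the identity of translation length with stable translation length on the Hadamard manifold $(\mathfrak{H}(V),d_{\mathtt{S}})$ and reduce to the sublinear bound $d_{\mathtt{S}}(x,\varphi_{\mathtt{u}}^{n}.x)=O(\log n)$, which indeed follows from the polynomial growth of $\exp(nN)$ via the Cartan projection (equivalently via the cross-ratio formula (\ref{d_S_global})). The paper instead proves each inequality separately by an approximation argument with upper semi-continuity (Lemma~\ref{tlen_upper_semicontinuous}): for $\tlen_{\mathtt{S}}(\varphi)\leq\tlen_{\mathtt{S}}(\varphi_{\mathtt{ss}})$ it notes that $\varphi_{\mathtt{ss}}\varphi_{\mathtt{u}}^{1/m}$ is conjugate to $\varphi$ for every $m\in\Natural$ (appealing to Gutt's normal-form invariants) and lets $m\to\infty$; for $\tlen_{\mathtt{S}}(\varphi_{\mathtt{ss}})\leq\tlen_{\mathtt{S}}(\varphi)$ it approximates $\varphi$ by semisimple $\psi_m$ with simple spectrum and uses the already-established Lemma~\ref{semisimple_tlen_Siegel} to identify $\lim_m\tlen_{\mathtt{S}}(\psi_m)$ with $\tlen_{\mathtt{S}}(\varphi_{\mathtt{ss}})$. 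Your route has the virtue of not leaning on Lemma~\ref{semisimple_tlen_Siegel} or on the conjugacy classification; the paper's route avoids the $\tlen=\tlen^{*}$ black box.

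One caveat on your citation: \cite[Chapter II, \S 6]{Bridson--Haefliger_book} states $\tlen(\psi^{n})=n\,\tlen(\psi)$ only for \emph{semi-simple} isometries $\psi$ (see II.6.8 there). This already gives you $\tlen_{\mathtt{S}}(\varphi_{\mathtt{ss}})=\tlen_{\mathtt{S}}^{*}(\varphi_{\mathtt{ss}})=\tlen_{\mathtt{S}}^{*}(\varphi)\leq\tlen_{\mathtt{S}}(\varphi)$, since $\varphi_{\mathtt{ss}}$ acts as a semi-simple isometry on the symmetric space. But the reverse inequality $\tlen_{\mathtt{S}}(\varphi)\leq\tlen_{\mathtt{S}}(\varphi_{\mathtt{ss}})$ needs $\tlen_{\mathtt{S}}(\varphi)=\tlen_{\mathtt{S}}^{*}(\varphi)$ for the possibly non-semi-simple isometry $\varphi$. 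This equality does hold on symmetric spaces of noncompact type (both sides equal the norm of the Jordan projection $\lambda(\varphi)$), but you should either supply that argument or point to a precise reference rather than the blanket citation to \S 6.
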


\begin{proof}
	For any $r\in\Real$,
	the element	$\varphi_{\mathtt{u}}^r\in\mathrm{Sp}(V)$ 
	is well defined via the series expansion 
	$(1+x)^r=\sum_{j=0}^\infty{r\choose j}x^j$,
	because $\varphi_{\mathtt{u}}-\mathbf{1}$ is nilpotent in $\mathrm{End}_\Real(V)$.
	Moreover, for any $r>0$,	the element
	$\varphi_{\mathtt{ss}}\varphi_{\mathtt{u}}^r$ is conjugate 
	to $\varphi=\varphi_{\mathtt{ss}}\varphi_{\mathtt{u}}$ in $\mathrm{Sp}(V)$.
	For example, this follows from Gutt's characterization of conjugacy classes;
	the characterization only uses eigenvalues, and dimensions of generalized eigenspaces,
	and the ranks and signatures of certain symmetric forms on the generalized eigenspaces,
	all of which are obviously preserved as $r>0$ varies;
	see \cite[Theorem 1.2]{Gutt_normal_form}.
	Observe $\tlen_{\mathtt{S}}$ is constant on any conjugacy class of $\mathrm{Sp}(V)$.
	By Lemma \ref{tlen_upper_semicontinuous},
	we obtain
	$$\tlen_{\mathtt{S}}(\varphi)=
	\lim_{m\to\infty}\tlen_{\mathtt{S}}\left(\varphi_{\mathtt{ss}}\varphi_{\mathtt{u}}^{1/m}\right)\leq
	\tlen_{\mathtt{S}}(\varphi_{\mathtt{ss}}).$$
	
	To see the other direction, 
	we observe that symplectic linear transformations with no redundant characteristic roots
	form a dense open subset of $\mathrm{Sp}(V)$.
	(In fact, the discriminant of the characteristic polynomial being zero
	determines an algebraic, proper subset of the real algebraic group $\mathrm{Sp}(V)$,
	whose interior has to be empty.)
	Therefore, we can take a sequence $(\psi_m)_{m\in\Natural}$ in $\mathrm{Sp}(V)$,
	converging to $\varphi$, such that each $\psi_m$ has no redudant characteristic roots.
	In particular, $\psi_m$ are all semisimple.
	It follows from Lemma \ref{semisimple_tlen_Siegel}
	that the formula in Theorem \ref{tlen_Siegel} holds for all $\tlen_{\mathtt{S}}(\psi_m)$
	and for $\tlen_{\mathtt{S}}(\varphi_{\mathtt{ss}})$.
	Note that $\varphi_{\mathtt{ss}}$ has the same characteristic polynomial as that of $\varphi$,
	so the characteristic roots of $\psi_m$ converge to those of $\varphi_{\mathtt{ss}}$.
	By Lemma \ref{tlen_upper_semicontinuous}, we obtain
	$$\tlen_{\mathtt{S}}(\varphi_{\mathtt{ss}})=
	\lim_{m\to\infty} \tlen_{\mathtt{S}}(\psi_m) \leq	
	\tlen_{\mathtt{S}}(\varphi).$$ 
	
	We conclude that $\tlen_{\mathtt{S}}(\varphi)$ must be equal to $\tlen_{\mathtt{S}}(\varphi_{\mathtt{ss}})$.
\end{proof}

Theorem \ref{tlen_Siegel} follows from Lemmas \ref{semisimple_tlen_Siegel} and \ref{semisimple_tlen_suffices}.

\section{Invariant Riemannian metrics on the Teichm{\"u}ller space}\label{Sec-metrics}
In this section, we recall some invariant Riemannian metrics or pseudometrics
on the Teichm{\"u}ller space, and also consider similar constructions
associated to regular finite covers of the surface. 
As some of the metrics have name collision in the literature,
we rename a few in this paper, not necessarily suggesting an accurate attribution.

\subsection{Ordinary versions}

\subsubsection{The Weil--Petersson metric}
By the uniformization theorem, 
there is a unique conformal hyperbolic metric $g_{\mathtt{h}}$ on $X$,
(that is, a Riemannian metric of constant curvature $-1$ 
conformal to the complex charts).
The well-known Gauss--Bonnet theorem implies
$$\mathrm{Area}_{\mathtt{h}}(X)=-2\pi\chi(S)=4\pi\times (\mathrm{genus}(S)-1).$$
%
%
%
With respect to $g_{\mathtt{h}}$,
any tangent vector $\xi\in T_X\mathrm{Teich}(S)$ 
is represented by a unique harmonic Beltrami differential $\mu\in B(X)$,
that is, $\mu=q/g_{\mathtt{h}}$ for some 
holomorphic quadratic differential $q\in Q(X)$.
The \emph{Weil--Petersson metric} on $\mathrm{Teich}(S)$
is the Riemannian metric determined by
\begin{equation}\label{WP_def}
\|\xi\|_{\mathtt{WP}}^2=\int_X |\mu|^2\, \ud\mathrm{Area}_{\mathtt{h}}= \int_X \frac{q\bar{q}}{g_{\mathtt{h}}}
\end{equation}
for any $\xi\in T_X\mathrm{Teich}(S)=B(X)/Q(X)^\perp$
and the hyperbolic harmonic representative $\mu=\overline{q/g_{\mathtt{h}}}$.

The Weil--Petersson metric on $\mathrm{Teich}(S)$ has nonpositive sectional curvature.
It is not complete, but any pair of points in $\mathrm{Teich}(S)$ 
can be connected with a unique geodesic segment.
The mapping class group $\mathrm{Mod}(S)$ acting on $\mathrm{Teich}(S)$
preserves the Weil--Petersson metric.
Moreover, any pseudo-Anosov mapping class $f\in\mathrm{Mod}(S)$ has an axis,
namely, a unique invariant complete geodesic, 
along which the Weil--Petersson translation length $\tlen_{\mathtt{WP}}(f)$ is realized.
See the survey \cite{Wolpert_wp} of S.~A.~Wolpert for more information.

\subsubsection{The Habermann--Jost metric}
Using the Bergmann metric instead of the hyperbolic metric,
we obtain what we call the Habermann--Jost metric on $\mathrm{Teich}(S)$.
This metric has been studied 
by L.~Habermann and J.~Jost \cite{Habermann--Jost} in detail.

For any $X\in\mathrm{Teich}(S)$,
the \emph{Bergmann metric} $g_{\mathtt{B}}$ on $X$ refers to the pullback
of the canonical flat metric on $\mathrm{Jac}(X)=\Omega^1(X)^*/H_1(X;\Integral)$
via the Abel--Jacobi map $X\to \mathrm{Jac}(X)$,
which is independent of the auxiliary base point.
To avoid confusion about the normalization, 
one makes sure
$$\mathrm{Area}_{\mathtt{B}}(X)=\mathrm{genus}(S).$$
The \emph{Haberman--Jost metric} on $\mathrm{Teich}(S)$
is the Riemannian metric determined by
\begin{equation}\label{HJ_def}
\|\xi\|_{\mathtt{HJ}}^2=\int_X |\mu|^2\,\ud\mathrm{Area}_{\mathtt{B}}
= \int_X \frac{q\bar{q}}{g_{\mathtt{B}}},
\end{equation}
for any $\xi\in T_X\mathrm{Teich}(S)=B(X)/Q(X)^\perp$
and its Bergman harmonic representative $\mu=\overline{q/g_{\mathtt{B}}}\in B(X)$,
for some unique $q\in Q(X)$.

\begin{lemma}\label{HJ_theta}
	Let $\theta_1,\cdots,\theta_p$ be a Hermitian orthonormal basis of $\Omega^1(X)$,
	where $p=\dim_\Complex \Omega^1(X)$ is equal to the genus of $S$.
	Then, the following formulas hold,
		$$g_{\mathtt{B}}=\sum_{j=1}^p \theta_j\bar\theta_j,$$
	and for any $\xi\in T_X\mathrm{Teich}(S)$ represented uniquely as $\mu=\overline{q/g_{\mathtt{B}}}$,
		$$\|\xi\|_{\mathtt{HJ}}^2=\sum_{j=1}^p \int_X \mu\theta_j\,\overline{\mu\theta_j}.$$
\end{lemma}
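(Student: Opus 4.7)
The plan is to treat both formulas as direct consequences of the definition of $g_{\mathtt{B}}$ as the pullback of the flat metric on $\mathrm{Jac}(X)$ under the Abel--Jacobi map. Fix a local holomorphic coordinate $z$ on $X$ and write each $\theta_j=\phi_j(z)\,\ud z$.

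For the first formula, fix a base point $x_0\in X$. The Abel--Jacobi map sends $x\in X$ to the functional $\alpha\mapsto\int_{x_0}^x\alpha$ on $\Omega^1(X)$, which in the dual basis $\theta_1^*,\cdots,\theta_p^*$ of $\Omega^1(X)^*$ has coordinates $u_j(x)=\int_{x_0}^x\theta_j$. Since $\theta_1,\cdots,\theta_p$ are Hermitian orthonormal in $\Omega^1(X)$, the dual basis is orthonormal in $\Omega^1(X)^*$, so the canonical flat metric on $\mathrm{Jac}(X)$ takes the form $\sum_j\ud u_j\,\ud\bar u_j$ in these coordinates. Pulling back under Abel--Jacobi, and noting that $\ud u_j=\theta_j$ on $X$, gives $g_{\mathtt{B}}=\sum_j\theta_j\bar\theta_j$. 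I would check at this point that the Hermitian inner product on $\Omega^1(X)$ is normalized as $\langle\alpha,\beta\rangle=\tfrac{\sqrt{-1}}{2}\int_X\alpha\wedge\bar\beta$, so that $\int_X|\phi_j|^2\,\ud x\,\ud y=1$ and hence the total area of $g_{\mathtt{B}}$ equals $\mathrm{genus}(S)=p$, in accordance with the area normalization prescribed in the definition.

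For the second formula, I would substitute the first into (\ref{HJ_def}). Locally writing $\mu=m(z)\,\ud\bar z/\ud z$, the relation $\mu=\overline{q/g_{\mathtt{B}}}$ yields $q=\bar\mu\cdot g_{\mathtt{B}}$, and therefore $q\bar q/g_{\mathtt{B}}=|\mu|^2g_{\mathtt{B}}$, which is the area density $|\mu|^2\,\ud\mathrm{Area}_{\mathtt{B}}$. Using the first formula, this density decomposes term by term as $\sum_j|\mu|^2\theta_j\bar\theta_j=\sum_j(\mu\theta_j)\overline{(\mu\theta_j)}$ on $X$, and integration over $X$ gives the desired identity.

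The only obstacle is the bookkeeping: ensuring that the dual inner product on $\Omega^1(X)^*$, the flat metric on $\mathrm{Jac}(X)$, and the symbolic products $\theta_j\bar\theta_j$ and $(\mu\theta_j)\overline{(\mu\theta_j)}$ are all interpreted as the real positive densities that appear inside the integrals, consistently with the normalization $\mathrm{Area}_{\mathtt{B}}(X)=\mathrm{genus}(S)$. Once these conventions are fixed, the lemma reduces to the routine local computation outlined above.
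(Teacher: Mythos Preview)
Your proposal is correct and follows essentially the same approach as the paper: the paper simply states that the first formula is a standard definition of the Bergman metric whose geometric content is the Abel--Jacobi description, and that the second formula follows immediately by substituting the first into (\ref{HJ_def}). You have supplied the routine local computations that the paper leaves implicit, with the correct normalization checks.
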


\begin{proof}
In fact, the formula for $g_{\mathtt{B}}$ in Lemma \ref{HJ_theta} is 
a usual definition of the Bergman metric on $X$, 
and its geometric interpretation is 
our above description with the Abel--Jacobi map.
The formula for $\|\_\|_{\mathtt{HJ}}$ in Lemma \ref{HJ_theta}
follows immediately
from the formula for $g_{\mathtt{B}}$ and the defining expression in (\ref{HJ_def}).
Note that $\overline{\mu\theta_j}$ are differentials of type $(1,0)$,
but not holomorphic in general.
\end{proof}

\subsubsection{The Royden--Siegel metric}
The natural map $J\colon \mathrm{Teich}(S)\to\mathfrak{H}(S)$ 
and the Siegel metric on $\mathfrak{H}(S)$ induces a canonical Riemannian pseudometric
on $\mathrm{Teich}(S)$, 
degenerating exactly on the hyperelliptic locus.
This construction results in what we call the Royden--Siegel metric.
In the survey \cite{Royden_invariant_metric},
H.~L.~Royden discusses several such pullback metrics on $\mathrm{Teich}(S)$
using canonical differential metrics on $\mathfrak{H}(S)$.

For any $\xi\in T_X\mathrm{Teich}(S)$,
the \emph{Royden--Siegel metric} is the Riemannian pseudometric determined by
\begin{equation}\label{RS_def}
\|\xi\|_{\mathtt{RS}}^2=\|J_*(\xi)\|_{\mathtt{S}}^2,
\end{equation}
according to our notations in (\ref{T_to_H}) and (\ref{g_S_def}).

\begin{lemma}\label{RS_theta}
	Let $\theta_1,\cdots,\theta_p$ be a Hermitian orthonormal basis of $\Omega^1(X)$,
	where $p=\dim_\Complex \Omega^1(X)$ is equal to the genus of $S$.
	Then, for any $\xi\in T_X\mathrm{Teich}(S)$ represented by any $\mu\in B(X)$,
	the following formula holds,
	$$\|\xi\|_{\mathtt{RS}}^2=4 \times \sum_{j=1}^p\sum_{k=1}^p \,\left|\int_X \mu\theta_j \theta_k\right|^2.$$
\end{lemma}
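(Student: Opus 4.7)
The plan is to compute $\|J_*(\xi)\|_{\mathtt{S}}^2$ directly in explicit coordinates on the generalized upper half plane. I fix a symplectic basis of $H_1(S;\Integral)$, which via the identification in the remark after (\ref{Siegel_UHP}) induces $\mathfrak{H}(S)\cong\mathfrak{H}_p$ with $p=\mathrm{genus}(S)$. Under this identification, $J$ records the period matrix $Z=X+\sqrt{-1}\,Y\in\mathfrak{H}_p$ in the corresponding normalized basis $\omega_1,\ldots,\omega_p$ of $\Omega^1(X)$ (characterized by $\int_{a_k}\omega_j=\delta_{jk}$), and the Siegel metric tensor at $Z$ is $\mathrm{tr}\bigl(Y^{-1}\ud Z\,Y^{-1}\overline{\ud Z}\bigr)$ by (\ref{g_S_def}).

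The tangent motion $\dot Z$ induced by a Beltrami representative $\mu$ of $\xi$ is supplied by Rauch's classical variational formula, which expresses $\dot Z_{jk}$ as a scalar multiple of $\int_X\mu\,\omega_j\omega_k$. Next I pass to the orthonormal family: writing $\theta_j=\sum_l A_{jl}\omega_l$, the Riemann bilinear relations $\int_X\omega_j\wedge\bar\omega_k=-2\sqrt{-1}\,Y_{jk}$ convert orthonormality of the $\theta_j$'s into the matrix identity $AYA^*=I$, equivalently $Y^{-1}=A^*A$. Because $Y^{-1}$ is real-symmetric, we also have $Y^{-1}=A^T\bar A$; this dual presentation of $Y^{-1}$ is the key observation that makes the computation collapse.

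Substituting both presentations of $Y^{-1}$ into $\mathrm{tr}(Y^{-1}\dot Z\,Y^{-1}\overline{\dot Z})$ and using cyclicity of trace yields
$$\mathrm{tr}\bigl(Y^{-1}\dot Z\,Y^{-1}\overline{\dot Z}\bigr)=\mathrm{tr}\bigl(N\overline{N}\bigr),\qquad N:=A\dot Z A^T.$$
The matrix $N$ is complex symmetric (since $\dot Z$ is), and by the change-of-basis rule $\theta_j\theta_k=\sum_{l,m}A_{jl}A_{km}\omega_l\omega_m$ together with Rauch's formula, its entries are proportional to $\int_X\mu\,\theta_j\theta_k$. The symmetry of $N$ then telescopes the trace into $\sum_{j,k}|N_{jk}|^2$, producing the right-hand side of the lemma up to one overall numerical constant.

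The main obstacle is bookkeeping: convention-dependent constants (from Rauch's formula, from the precise identification $\mathfrak{H}(S)\cong\mathfrak{H}_p$ made in the remark after (\ref{Siegel_UHP}), and from the paper's normalization of the Hermitian pairing used in (\ref{HJ_def})) must be tracked so that the final coefficient comes out to exactly $4$ rather than some other power of $2$. Once those are reconciled, the substantive content of the lemma reduces to the single algebraic manipulation above, and the formula extends across the hyperelliptic locus by continuity of both sides.
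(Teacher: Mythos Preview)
Your approach is correct and is genuinely different from the paper's. The paper does not compute $J_*(\xi)$ at all: it simply cites Royden's theorem, which already provides the formula in the double-integral form
\[
\|\xi\|_{\mathtt{RS}}^2=4\int_X\int_X \mu(x')\,K(x',x'')\,\overline{\mu(x'')},
\qquad K(x',x'')=\Bigl(\sum_l\theta_l(x')\overline{\theta_l(x'')}\Bigr)^2,
\]
and then expands the square and separates variables to obtain the asserted sum. The factor $4$ is explained there purely by comparing Royden's normalization of the Siegel metric with Siegel's.

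Your route, by contrast, re-derives Royden's result from scratch: Rauch's variational formula gives $\dot Z$, the Riemann bilinear relations translate orthonormality of the $\theta_j$ into $Y^{-1}=A^*A=A^T\bar A$, and the cyclic-trace identity collapses $\mathrm{tr}(Y^{-1}\dot Z\,Y^{-1}\overline{\dot Z})$ to $\sum_{j,k}|N_{jk}|^2$ with $N=A\dot ZA^T$. This is more self-contained and makes the geometric origin of the formula (the period map plus the explicit Siegel tensor) transparent, at the cost of having to invoke Rauch and track the constant yourself. The paper's approach is shorter precisely because it outsources that tracking to Royden. Your honesty that the coefficient~$4$ is the residual bookkeeping is appropriate; once one fixes the convention $\langle\alpha,\beta\rangle=\tfrac{\sqrt{-1}}{2}\int_X\alpha\wedge\bar\beta$ used in the paper, Rauch's formula in the matching convention indeed yields $|c|^2=4$. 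The continuity remark at the end is harmless but unnecessary: both sides are defined and smooth everywhere, so no extension across the hyperelliptic locus is needed.
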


\begin{proof}
	The formula is equivalent to \cite[Theorem 2]{Royden_invariant_metric},
	except a factor $4$ due to normalization. 
	To be precise, Royden uses a rescaled metric on the generalized upper half plane that is 
	half of Siegel's in length, 
	(compare \cite[{Eq.~(15) on p.~397}]{Royden_invariant_metric}
	and \cite[{Eq.~(2) on p.~3}]{Siegel_book}).
	With our notations (following Siegel's normalization),
	\cite[Theorem 2]{Royden_invariant_metric} can be rewritten as
	$$\|\xi\|_{\mathtt{RS}}^2=4 \times \int_X\int_X \mu(x')K(x',x'')\overline{\mu(x'')},$$
	where $x'$ parametrizes the first $X$ and $x''$ parametrizes the second $X$.
	The kernel $K(x',x'')$ is 
	a holomorphic quadratic differential in the first variable,
	and an anti-holomorphic quadratic differential in the second variable,
	and is Hermitian symmetric under switching of the variables.
	It is explicitly constructed as
	$$K(x',x'')=\left(\sum_l\theta_l(x')\overline{\theta_l(x'')}\right)^2
	=\sum_j\sum_k \theta_j(x')\theta_k(x')\overline{\theta_j(x'')\theta_k(x'')},$$
	indices all ranging over $\{1,\cdots,p\}$.
	Plug the last expression into the above double integral,
	and move the summations out of the integrations.
	Then the variables separate, and the asserted formula follows.
\end{proof}

\subsection{Virtual versions}
Any regular finite covering map $S'\to S$ induces a canonical embedding
$$\mathrm{Teich}(S)\to\mathrm{Teich}(S'),$$
such that any Riemann surface with marking $S\to X$ 
goes to $S'\to X'$
where $X'\to X$ is holomorphic and equivariant with $S'\to S$. 

The Royden--Siegel metric on $\mathrm{Teich}(S')$ induces 
a pseudometric on $\mathrm{Teich}(S)$ by restriction.
To normalize, 
we introduce the \emph{virtual Royden--Siegel metric}
on $\mathrm{Teich}(S)$ with respect to the regular finite cover $S'$
as the Riemannian pseudometric, such that for any $\xi\in T_X\mathrm{Teich}(S)$, 
we define
\begin{equation}\label{virtual_RS_def}
\|\xi\|^2_{\mathtt{RS}'}=\|\xi'\|^2_{\mathtt{RS}}\times \frac{1}{[S':S]}
=\|J'_*(\xi')\|^2_{\mathtt{S}}\times \frac{1}{[S':S]},
\end{equation}
where $\xi'\in T_{X'}\mathrm{Teich}(S')$ denotes the tangent map image of $\xi$,
and 
where $J'$ denotes the natural map $\mathrm{Teich}(S')\to \mathfrak{H}(S')$ 
as in (\ref{T_to_H}).

We introduce the \emph{virtual Haberman--Jost metric} on $\mathrm{Teich}(S)$
with respect to any regular finite cover $S'$ as follows.
For any $X\in\mathrm{Teich}(S)$,
since the deck transformations on 
the corresponding $X'$ are all biholomorphic,
they all preserve the Bergman metric $g'_{\mathtt{B}}$ on $X'$.
Therefore, $g'_{\mathtt{B}}$ is the pullback of a unique conformal metric $g_{\mathtt{B}'}$ on $X$.
We notice the relation
$$\mathrm{Area}_{\mathtt{B}'}(X)=\frac{\mathrm{Area}_{\mathtt{B}}(X')}{[S':S]}=
\mathrm{genus}(S)-1+\frac{1}{[S':S]}.$$
For any $\xi\in T_X\mathrm{Teich}(S)$, we define
$$\|\xi\|_{\mathtt{HJ}'}^2=
\int_X |\mu|^2\,\ud\mathrm{Area}_{\mathtt{B}'}
= \int_X \frac{q\bar{q}}{g_{\mathtt{B}'}},$$
where $\mu=\overline{q/g_{\mathtt{B}'}}$ 
denotes the Beltrami differential representative of $\xi$
that is harmonic with respect to $g_{\mathtt{B}'}$.
The last expression implies a relation
$$\|\xi\|_{\mathtt{HJ}'}^2=\|\xi'\|_{\mathtt{HJ}}^2\times \frac{1}{[S':S]},$$
where $\xi'\in T_{X'}\mathrm{Teich}(S')$ denotes the tangent map image of $\xi$.

There is no need to introduce any ``virtual Weil--Petersson metric'',
as it would be the same thing as the Weil--Petersson metric.
This is because the conformal hyperbolic metric $g_{\mathtt{h}}'$ on $X'$
agrees with the pullback of $g_{\mathtt{h}}$ on $X$.
We only mention the obvious relation
$$\|\xi\|_{\mathtt{WP}}^2=\|\xi'\|_{\mathtt{WP}}^2\times \frac{1}{[S':S]},$$
where $\xi'\in T_{X'}\mathrm{Teich}(S')$ 
denotes the tangent map image of $\xi\in T_X\mathrm{Teich}(S)$.

\section{Comparison and convergence of virtual metrics}\label{Sec-two_lemmas}
In this section, we prove two lemmas regarding relation of virtual metrics
on the Teichm{\"u}ller space.
The first is an inequality due to Habermann and Jost \cite{Habermann--Jost}
in the case of ordinary metrics, and we only carry it onto the virtual version
(Lemma \ref{RS_leq_HJ}).
The second is an application of a well-known theorem due to D.~A.~Kazhdan \cite{Kazhdan_limit},
regarding convergence of the Bergmann metrics on finite coverings of Riemann surfaces
(Lemma \ref{HJ_lim_WP}).

\begin{lemma}\label{RS_leq_HJ}
	Let $S$ be a closed orientable surface of genus at least $2$.
	With respect to any finite regular cover $S'$ over $S$,
	and for all $X\in\mathrm{Teich}(S)$ and $\xi\in T_X\mathrm{Teich}(S)$,
	the following inequality holds,
	$$\|\xi\|_{\mathtt{RS}'}\leq
	2\times \|\xi\|_{\mathtt{HJ}'}.$$
\end{lemma}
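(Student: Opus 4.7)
The plan is to reduce the inequality to the analogous ordinary comparison on the cover $S'$. By the scaling relations recorded in Section \ref{Sec-metrics},
$$\|\xi\|_{\mathtt{RS}'}^2 = \frac{\|\xi'\|_{\mathtt{RS}}^2}{[S':S]}, \qquad \|\xi\|_{\mathtt{HJ}'}^2 = \frac{\|\xi'\|_{\mathtt{HJ}}^2}{[S':S]},$$
where $\xi'\in T_{X'}\mathrm{Teich}(S')$ denotes the tangent image of $\xi$ under the canonical embedding $\mathrm{Teich}(S)\to\mathrm{Teich}(S')$. Since both virtual norms scale by the same factor $1/[S':S]$, the asserted inequality is equivalent to the ordinary pointwise inequality $\|\xi'\|_{\mathtt{RS}} \leq 2\|\xi'\|_{\mathtt{HJ}}$ on $\mathrm{Teich}(S')$. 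Hence it suffices to establish, on an arbitrary closed Riemann surface $X$ of genus $p\geq 2$, the ordinary Habermann--Jost comparison $\|\eta\|_{\mathtt{RS}} \leq 2\|\eta\|_{\mathtt{HJ}}$ for every $\eta\in T_X\mathrm{Teich}(S)$, following \cite{Habermann--Jost}, and then specialize.

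For the ordinary statement, I would work directly from the basis-dependent formulas of Lemmas \ref{HJ_theta} and \ref{RS_theta}. Fix a Hermitian orthonormal basis $\theta_1,\ldots,\theta_p$ of $\Omega^1(X)$, and represent $\eta$ by the Bergman-harmonic Beltrami differential $\mu=\overline{q/g_{\mathtt{B}}}$. Each product $\mu\theta_j$ is then a $(0,1)$-form on $X$, and $\bar\theta_1,\ldots,\bar\theta_p$ form a Hermitian orthonormal basis of the finite-dimensional anti-holomorphic subspace $\overline{\Omega^1(X)}$ inside the Hilbert space of square-integrable $(0,1)$-forms on $X$. With respect to that inner product, the integrals $\int_X \mu\theta_j\theta_k$ are, up to a universal constant dictated by the pairing convention, precisely the coefficients of $\mu\theta_j$ in its orthogonal projection onto $\overline{\Omega^1(X)}$, while $\int_X \mu\theta_j\,\overline{\mu\theta_j}$ is the full squared $L^2$ norm of $\mu\theta_j$. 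Bessel's inequality applied to each $\mu\theta_j$ then yields
$$\sum_{k=1}^{p} \left| \int_X \mu\theta_j\theta_k \right|^2 \leq \int_X \mu\theta_j\,\overline{\mu\theta_j}.$$
Summing over $j$ and multiplying by $4$, Lemma \ref{RS_theta} identifies the left-hand side with $\|\eta\|_{\mathtt{RS}}^2$ and Lemma \ref{HJ_theta} identifies the right-hand side with $4\|\eta\|_{\mathtt{HJ}}^2$, producing $\|\eta\|_{\mathtt{RS}} \leq 2\|\eta\|_{\mathtt{HJ}}$.

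The substantive point, rather than a real obstacle, will be to verify that the normalization factor $4$ in Lemma \ref{RS_theta} is exactly the one required to absorb the constants appearing when $\int_X \mu\theta_j\theta_k$ and $\int_X \mu\theta_j\,\overline{\mu\theta_j}$ are re-read as a Hermitian inner product coefficient and a squared norm on $(0,1)$-forms. This matching of conventions is essentially the content of Habermann and Jost's argument in \cite{Habermann--Jost}; the virtual statement in Lemma \ref{RS_leq_HJ} adds nothing to it beyond the trivial rescaling of the first paragraph.
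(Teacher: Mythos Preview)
Your proposal is correct and follows essentially the same route as the paper: reduce to the ordinary case by the common rescaling factor $1/[S':S]$, then apply Bessel's inequality to each $\mu\theta_j$ against the orthonormal system coming from $\Omega^1(X)$, and read off the two sides via Lemmas~\ref{HJ_theta} and~\ref{RS_theta}. The paper phrases the projection step with the $(1,0)$--form $\overline{\mu\theta_j}$ projected onto $\Omega^1(X)$, whereas you use the conjugate $(0,1)$--form $\mu\theta_j$ projected onto $\overline{\Omega^1(X)}$; these are identical arguments. Your hedging in the last paragraph about matching constants is unnecessary: once you adopt the integral expressions exactly as written in Lemmas~\ref{HJ_theta} and~\ref{RS_theta}, the inner product on $(0,1)$--forms is $\langle\alpha,\beta\rangle=\int_X\alpha\,\overline{\beta}$, the $\bar\theta_k$ are already orthonormal for it, and Bessel gives $\sum_k\left|\int_X\mu\theta_j\theta_k\right|^2\leq\int_X\mu\theta_j\,\overline{\mu\theta_j}$ with no extra factor to absorb.
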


\begin{proof}
		In the ordinary case, 
		the inequality $\|\xi\|_{\mathtt{RS}}\leq 2\times \|\xi\|_{\mathtt{HJ}}$
		is the same as \cite[Lemma 6.3]{Habermann--Jost},
		except a constant factor due to normalization;
		(check the formula (6.5) therein, 
		which has to agree with Royden's normalization 
		of the Siegel metric \cite[{Eq.~(15) on p.~397}]{Royden_invariant_metric},
		rather than Siegel's \cite[{Eq.~(2) on p.~3}]{Siegel_book}).
		For the reader's convenience,
		below we derive the inequality quickly from Lemmas \ref{HJ_theta} and \ref{RS_theta},
		following the same idea as in \cite{Habermann--Jost}.				
		
		Note that it suffices to prove the ordinary case $S'=S$, 
		otherwise arguing with $S'$	and dividing both sides by $[S':S]$.
		Fix an orthonormal basis of
		holomorphic differentials	$\theta_1,\cdots,\theta_p$ in $\Omega^1(X)$.
		Let $\mu=q/g_{\mathtt{B}}$ be a Bergmann harmonic Beltrami differential 
		representative of $\xi$.
		We denote by $\eta_j(\mu)\in \Omega^1(X)$ the orthogonal summand of 
		the $(1,0)$--differential $\overline{\mu\theta_j}\in A^{1,0}(X)$,
		namely,
		$$\eta_j(\mu)=\sum_l \theta_l\int_X \mu\theta_j\theta_l,$$
		for any $j=1,\cdots,p$.
		Since $\theta_1,\cdots,\theta_j$ are orthonormal, we obtain by Lemma \ref{RS_theta},
		$$\sum_j \int_X \eta_j(\mu)\overline{\eta_j(\mu)}
		=\sum_j \sum_k \left(\int_X \mu\theta_j\theta_k\right)\overline{\left(\int_X \mu\theta_j\theta_k\right)}
		=\|\xi\|_{\mathtt{RS}}^2\times 1/4,$$
		and by Lemma \ref{HJ_theta},
		$$\sum_j \int_X \eta_j(\mu)\overline{\eta_j}
		\leq \sum_j \int_X \mu\theta_j\overline{\mu\theta_j} =\|\xi\|_{\mathtt{HJ}}^2.$$
		Then the asserted inequality follows.		
\end{proof}

\begin{lemma}\label{HJ_lim_WP}
	Let $S$ be a closed orientable surface of genus at least $2$.
	Suppose $(S'_n\to S)_{n\in\Natural}$ is a sequence of finite regular covers
	converging to the universal cover.
	Then, for all $X\in\mathrm{Teich}(S)$ and $\xi\in T_X\mathrm{Teich}(S)$,
	the following convergence holds,
	$$\lim_{n\to\infty} \|\xi\|_{\mathtt{HJ}'_n}
	=\frac{1}{\sqrt{4\pi}}\times\|\xi\|_{\mathtt{WP}},$$
	where the notation $n$ in the subscript indicates the respective cover $S'_n$.
	Morover, the convergence is uniform on any compact set 
	of tangent vectors $(X,\xi)$ on $\mathrm{Teich}(S)$.
\end{lemma}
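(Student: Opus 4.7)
The plan is to reduce the asserted convergence to Kazhdan's theorem on the asymptotic behavior of Bergman metrics under finite regular covers of a Riemann surface (as cited in the lemma's preamble). Specifically, Kazhdan's theorem yields uniform convergence $g_{\mathtt{B}'_n}\to g_{\mathtt{h}}/(4\pi)$ on $X$ as positive conformal factors. The constant $1/(4\pi)$ is forced by total areas: Riemann--Hurwitz gives $\mathrm{Area}_{\mathtt{B}'_n}(X) = (\mathrm{genus}(S)-1) + 1/[S'_n:S] \to \mathrm{genus}(S)-1 = \mathrm{Area}_{\mathtt{h}}(X)/(4\pi)$, and Kazhdan upgrades this averaged convergence to uniform pointwise convergence.

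Granting this analytic input, I would prove the lemma by a sandwich estimate. Write $g_{\mathtt{B}'_n} = \rho_n\cdot g_{\mathtt{h}}$ with positive smooth functions $\rho_n$ converging uniformly to $1/(4\pi)$ on $X$. Both norms admit the variational characterization $\|\xi\|_g^2 = \inf_{\mu \in [\xi]} \int_X |\mu|^2 \,\ud\mathrm{Area}_g$ over Beltrami representatives $\mu$ of $\xi$, with the infimum realized by the $g$-harmonic representative $\mu = \overline{q/g}$. For the upper bound, feed the Weil--Petersson harmonic representative $\mu_0 = \overline{q_0/g_{\mathtt{h}}}$ as a test representative into the virtual Habermann--Jost variational problem:
$$\|\xi\|_{\mathtt{HJ}'_n}^2 \;\leq\; \int_X |\mu_0|^2 \rho_n\,\ud\mathrm{Area}_{\mathtt{h}} \;\longrightarrow\; \frac{1}{4\pi}\int_X |\mu_0|^2\,\ud\mathrm{Area}_{\mathtt{h}} \;=\; \frac{\|\xi\|_{\mathtt{WP}}^2}{4\pi}.$$
For the lower bound, let $\mu_n$ denote the $g_{\mathtt{B}'_n}$-harmonic representative. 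Then
$$\|\xi\|_{\mathtt{HJ}'_n}^2 \;=\; \int_X |\mu_n|^2 \rho_n\,\ud\mathrm{Area}_{\mathtt{h}} \;\geq\; \bigl(\inf_X \rho_n\bigr)\int_X |\mu_n|^2\,\ud\mathrm{Area}_{\mathtt{h}} \;\geq\; \bigl(\inf_X \rho_n\bigr)\|\xi\|_{\mathtt{WP}}^2,$$
since $\mu_n$ is a representative of $\xi$, and $\inf_X \rho_n \to 1/(4\pi)$ by Kazhdan's uniform convergence. Combining the two bounds yields $\|\xi\|_{\mathtt{HJ}'_n}^2 \to \|\xi\|_{\mathtt{WP}}^2/(4\pi)$, whence the square-root convergence.

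The main obstacle is the promised uniformity on compact sets of tangent vectors $(X,\xi)$ in $\mathrm{Teich}(S)$. Kazhdan's theorem is customarily stated with the base Riemann surface $X$ fixed; upgrading it to uniform convergence $\rho_n \to 1/(4\pi)$ as $X$ varies in a compactum of $\mathrm{Teich}(S)$ requires exploiting smooth dependence of the Bergman kernel on the complex structure, or supplementing Kazhdan with a direct compactness argument. Once such $X$-uniformity is in hand, uniformity in $\xi$ is automatic: the WP-harmonic representative $\mu_0$ and its $L^2(g_{\mathtt{h}})$-norm depend continuously on $(X,\xi)$, so both sides of the sandwich become uniform over any compactum.
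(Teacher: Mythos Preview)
Your argument is correct and, like the paper's, rests entirely on Kazhdan's theorem. The paper's execution is a touch more direct: rather than sandwiching on the tangent side with test representatives, it passes to the cotangent space $Q(X)=T^*_X\mathrm{Teich}(S)$, where the dual norm of a \emph{fixed} holomorphic quadratic differential $q$ is simply $\|q\|^2_g=\int_X q\bar q/g$. Then $g_{\mathtt{B}'_n}\to g_{\mathtt{h}}/(4\pi)$ gives $\|q\|^2_{\mathtt{HJ}'_n}\to 4\pi\,\|q\|^2_{\mathtt{WP}}$ immediately, and finite-dimensional duality transfers this to tangent vectors. This bypasses your variational characterization and the two-sided estimate, though your version has the virtue of making the $L^2$-minimization structure explicit and is equally valid. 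On the uniformity clause, the paper is at least as terse as you are---it simply invokes continuity of the norms on the tangent bundle---so the issue you flag about $X$-dependence of Kazhdan's convergence is one neither treatment spells out.
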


\begin{proof}
	For any compact Riemann surface $X$, 
	Kazhdan \cite{Kazhdan_limit} shows that 
	the sequence of virtual Bergmann metrics $g_{\mathtt{B}'_n}$
	on $X$ converges uniformly everywhere
	to the hyperbolic metric $g_{\mathtt{h}}$	rescaled by $1/4\pi$ 
	(that is, by $1/\sqrt{4\pi}$ in length);
	see \cite[Appendix]{McMullen_entropy} 
	for a clear statement and a short proof.
	See also Remark \ref{Kazhdan_remark} regarding the assumption
	on the covering sequence.
	
	For any cotangent vector $q$ in $T^*_X\mathrm{Teich}(S)=Q(X)$,
	the dual norm squares of $q$ converge as
	$$\lim_{n\to\infty} \|q\|_{\mathtt{HJ}'_n}^2
	=\lim_{n\to\infty} \int_X\frac{q\bar{q}}{g_{\mathtt{B}'_n}}
	=4\pi \times \int_X \frac{q\bar{q}}{g_{\mathtt{h}}}
	=4\pi \times\|q\|_{\mathtt{WP}}^2.$$
	This implies the asserted convergence of norms
	$$\lim_{n\to\infty} \|\xi\|_{\mathtt{HJ}'_n}=\frac{1}{\sqrt{4\pi}}\times \|\xi\|_{\mathtt{WP}},$$
	for any tangent vector $\xi\in T_X\mathrm{Teich}(S)$.
	
	Moreover, the convergence is uniform on any compact subsets of tangent vectors,
	because the norm $\|\_\|_{\mathtt{HJ}'_n}$ and $\|\_\|_{\mathtt{WP}}$
	are continuous functions on the Teichm\"uller tangent bundle.
\end{proof}

\begin{remark}\label{Kazhdan_remark}
	In the literature, Kazhdan's theorem is classically stated
	for cofinal \emph{towers} of regular finite covers of a compact Riemann surface $X$.
	However, we can replace `tower' with `sequence'.
	In fact, 
	as is evident from the proof in \cite[Appendix]{McMullen_entropy},
	the argument only relies on the properties 
	that $\pi_1(X)$ acts on all the covers $X'_n$,
	and that $X'_n$ are all compact,
	and that the hyperbolic injectivity radii of $X'_n$ tend to infinity.
\end{remark}

%
%
%

\section{Proof of the main theorem}\label{Sec-main_proof}
This section is devoted to the proof of Theorem \ref{main_vhev_WP}.

Let $f\colon S\to S$ be a pseudo-Anosov automorphism
on a connected closed orientable surface of genus at least $2$.	
Suppose that $(S'_n,f'_n)_{n\in\Natural}$ 
is a cofinal sequence of regular finite covers $(S'_n)_{n\in\Natural}$ of $S$
together with pseudo-Anosov automorphisms $f'_n\colon S'_n\to S'_n$ lifting $f$. 

Let $(S',f')=(S'_n,f'_n)$ be any term of the above sequence.
Denote by $J'$ the canonical composite map 
$\mathrm{Teich}(S)\to\mathrm{Teich}(S')\to\mathfrak{H}(S')$,
where the first map is the embedding induced by $S'\to S$.
By definition, the action of $f'\in\mathrm{Mod}(S')$ on $\mathrm{Teich}(S')$ 
preserves $\mathrm{Teich}(S)$, and 
extends the action of $f\in\mathrm{Mod}(S)$ on $\mathrm{Teich}(S)$.

For any regular path $\gamma\colon [0,1]\to \mathrm{Teich}(S)$ 
with the property $\gamma(1)=f.\gamma(0)$,
it follows that 
the induced action $f'_*$ on $\mathfrak{H}(S)$
also moves $J'(\gamma(0))$ to $J'(\gamma(1))$.
We obtain by Theorem \ref{tlen_Siegel},
$$\mathrm{Length}_{\mathtt{S}}(J'\circ\gamma)\geq
\tlen_{\mathtt{S}}(f'_*)=
2\times\sqrt{\mathbf{w}(P)},$$
where $P$ denotes the characteristic polynomial of $f'_*$ on $H_1(S';\Complex)$.
On the other hand, we obtain by Lemma \ref{RS_leq_HJ} and (\ref{RS_def}),
$$\mathrm{Length}_{\mathtt{S}}(J'\circ\gamma)
=\mathrm{Length}_{\mathtt{RS}'}(\gamma)\times \sqrt{[S':S]}
\leq 2\times \mathrm{Length}_{\mathtt{HJ}'}(\gamma)\times \sqrt{[S':S]}.$$

Retaining the subscript $n$, the above inequalities yield
$$\sqrt{\frac{\mathbf{w}(P_n)}{[S'_n:S]}}\leq \mathrm{Length}_{\mathtt{HJ}'_n}(\gamma).$$

Passing to limit, we obtain by Lemma \ref{HJ_lim_WP},
$$\varlimsup_{n\to\infty}\sqrt{\frac{\mathbf{w}(P_n)}{[S'_n:S]}}\leq\frac{\mathrm{Length}_{\mathtt{WP}}(\gamma)}{\sqrt{4\pi}}.$$

Apply the above inequality to a regular path $\gamma$ along the Weil--Petersson axis of $f$.
The Weil--Petersson translation length is realized as
$\tlen_{\mathtt{WP}}(f)=d_{\mathtt{WP}}(\gamma(0),\gamma(1))=\mathrm{Length}_{\mathtt{WP}}(\gamma)$.
In particular, we obtain the desired inequality,
$$\varlimsup_{n\to\infty}\sqrt{\frac{\mathbf{w}(P_n)}{[S'_n:S]}}\leq\frac{\tlen_{\mathtt{WP}}(f)}{\sqrt{4\pi}}.$$

This completes the proof of Theorem \ref{main_vhev_WP}.

\bibliographystyle{amsalpha}


\end{document}